\documentclass{amsart}[11pt,fullpage]

%%%%%%%%%%%%%%%%%%%%%%%%%%%%%%%%%%%%%%%%%%%%%%%%%%%%%%%%%%%%%%%%%%%%%%%%%

\usepackage{amsmath,amssymb,amsthm}
\usepackage{mathrsfs}
\usepackage{pxfonts}

\usepackage{tikz}
\usepackage{multirow}

\usepackage{mycommands}
\newtheorem*{ack}{Acknowledgments}

\newcommand{\mr}{\text{mr}}

\newcommand{\ee}{\overset{\epsilon}{=}}
\newcommand{\aug}{\text{Aug}}
\newcommand{\augrep}{KCH representation }
\newcommand{\augreps}{KCH representations }                  

\begin{document}

\title[]{Knot contact homology and representations of knot groups}
\author[]{Christopher R. Cornwell}
\address[]{Mathematics Department, Duke University, Box 90320, Durham, NC 27708}
\email[]{cornwell@math.duke.edu}

\begin{abstract}
We study certain linear representations of the knot group that induce augmentations in knot contact homology. This perspective enhances our understanding of the relationship between the augmentation polynomial and the $A$-polynomial of a knot. For example, we show that for 2-bridge knots the polynomials agree and that this is never the case for (non-2-bridge) torus knots, nor for a family of 3-bridge pretzel knots. In addition, we obtain a lower bound on the meridional rank of the knot. As a consequence, our results give a new proof that torus knots and a family of pretzel knots have meridional rank equal to their bridge number.
\end{abstract}

\maketitle

\vspace*{-1cm}
\section{Introduction}

Let $K$ be a knot in $\rls^3$. The knot contact homology $HC_*(K)$ of $K$ is the Legendrian contact homology of a Legendrian torus over $K$ in the unit cotangent bundle of $\rls^3$. It appears that $HC_*(K)$ is both a very powerful invariant and also reasonably computable (see \cite{EENS12} and \cite{Ng11HT}). It was found in \cite{Ng08} that augmentations of the DGA underlying $HC_*(K)$ are related to the $A$-polynomial introduced in \cite{CCGLS}: the $A$-polynomial divides the augmentation polynomial $\aug_K(\lambda,\mu^2)$. 

This paper begins an effort to better understand this relationship through representations of the knot group, and specifically to understand factors of $\aug_K(\lambda,\mu)$ failing to appear in the $A$-polynomial. The particular representations we study, called \emph{KCH representations} (see Definition \ref{DefKCHrep}), have image in $GL_n\bb C$ and satisfy a certain condition on the peripheral subgroup.

We work with a specialization of $HC_*(K)$ obtained by setting an element in the ground ring to 1. The result is an algebra over the polynomial ring $R_0=\ints[\lambda^{\pm1},\mu^{\pm1}]$ (see Section \ref{KCHreview} for a definition, and \cite{NgSurv12} for a discussion of the more general invariant). An \emph{augmentation} of $HC_*(K)$ is equivalent to an algebra map $\epsilon:HC_0(K)\to\bb C$. The augmentation polynomial $\aug_K(\lambda,\mu)$ is defined so that its zero locus is the complex curve that is the closure of points $\setn{(\epsilon(\lambda),\epsilon(\mu))}{\epsilon\text{ an augmentation of }HC_*(K)}$. 

Throughout we use $\pi_K$ to denote the fundamental group of $\overline{\rls^3\setminus n(K)}$ and write $\bb C^*$ for $\bb C\setminus\{0\}$.

\subsection{Results} We first address the dimension of irreducible \augreps of $\pi_K$. If $\set{g_1,g_2,\ldots,g_r}$ is a set of generators for $\pi_K$ with the property that $g_i$ is a meridian of $K$ for each $1\le i\le r$, we call the generating set \emph{meridional}. 

\begin{thm} Let $\set{g_1,\ldots,g_r}$ be a meridional generating set for $\pi_K$. If $\rho:\pi_K\to\text{GL}_n\bb C$ is an irreducible \augrep of $\pi_K$ then $n\le r$.
\label{ThmDimBound}
\end{thm}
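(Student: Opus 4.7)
The plan is to use the defining property of a \augrep: each meridian of $K$ is mapped to a matrix that differs from a scalar $\mu_0 I$ by a rank-one operator, where $\mu_0\in\bb C^*$ is the value assigned to the meridian class by the induced augmentation. Since all meridians of $K$ are conjugate in $\pi_K$, the same $\mu_0$ works for each meridional generator $g_i$, so we may write
\[
\rho(g_i) = \mu_0 I + v_i f_i^T, \qquad v_i \in \bb C^n,\; f_i \in (\bb C^n)^*,\; i = 1,\ldots,r.
\]

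The key step is to show that the subspace $W = \mathrm{span}\{v_1,\ldots,v_r\}\subseteq \bb C^n$ is $\rho$-invariant. Once this is established, irreducibility of $\rho$ forces either $W = \bb C^n$, giving $n\le r$ directly, or $W=0$. In the latter case every $\rho(g_i)=\mu_0 I$, so $\rho$ factors through $\pi_K^{\mathrm{ab}}\cong \ints$, and irreducibility then demands $n=1\le r$.

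To prove invariance it suffices to check $\rho(g_i^{\pm1})W\subseteq W$ for each $i$, since the $g_i$ generate $\pi_K$. For $w\in W$ one computes
\[
\rho(g_i)(w) = \mu_0 w + (f_i^T w)\,v_i,
\]
and both $\mu_0 w$ and $v_i$ lie in $W$. The matrix determinant lemma gives $\rho(g_i)^{-1}=\mu_0^{-1}I+v_i\tilde f_i^T$ for some covector $\tilde f_i$, so the same calculation handles inverses. Hence $\rho(g)W\subseteq W$ for every $g\in\pi_K$.

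The only real obstacle is verifying that Definition \ref{DefKCHrep} genuinely imposes the ``$\mu_0 I$ plus rank-one'' form on each meridian; this structural condition is what makes $W$ the correct invariant subspace, and the conjugacy of meridians in $\pi_K$ is what ensures a common $\mu_0$ across the generators. After that input is secured the argument is a short linear-algebra calculation combined with one appeal to irreducibility.
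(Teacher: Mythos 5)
Your overall strategy is the paper's own: attach to each meridional generator $g_i$ a vector $v_i$ spanning the image of the rank-one part of $\rho(g_i)$, show that $W=\mathrm{span}\{v_1,\ldots,v_r\}$ is invariant, and invoke irreducibility. However, the structural claim you single out as the key input is wrong as stated. Definition \ref{DefKCHrep} says $\rho(m)$ is diagonalizable with eigenvalue $1$ of multiplicity $n-1$, so in an eigenbasis $\rho(m)=\text{diag}[\mu_0,1,\ldots,1]=I+(\mu_0-1)E_{11}$: a meridian differs from the \emph{identity} by a rank-one operator, not from the scalar matrix $\mu_0 I$. Indeed $\rho(m)-\mu_0I=\text{diag}[0,1-\mu_0,\ldots,1-\mu_0]$ has rank $n-1$, so the form $\mu_0I+v_if_i^T$ you posit fails whenever $n\ge3$ and $\mu_0\ne1$ (it holds only in the $n\le 2$ or trivial cases). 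Since you explicitly flag verifying this form as the one remaining obstacle, that verification would not go through.

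Fortunately the error is repairable without changing your architecture. Conjugating, $\rho(g_i)=W_i\rho(m)W_i^{-1}=I+(\mu_0-1)W_iE_{11}W_i^{-1}=I+v_if_i^T$ with $v_i$ spanning the image of the rank-one summand, and your computation becomes $\rho(g_i)w=w+(f_i^Tw)\,v_i\in W$, which is exactly the paper's relation $\rho(g_i)v_j=v_j+\alpha_i^jv_i$. Inverses then need no Sherman--Morrison step: each invertible $\rho(g_i)$ maps the finite-dimensional space $W$ into itself injectively, hence onto $W$, so $\rho(g_i)^{-1}W=W$ (your explicit computation also works once $\mu_0$ is replaced by $1$). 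The degenerate branch must also be restated: with the corrected form, $W=0$ forces each $v_if_i^T=0$, i.e.\ $\mu_0=1$ and $\rho(m)=I$, whence $\rho$ is trivial because $\pi_K$ is normally generated by a meridian, and irreducibility gives $n=1\le r$. With these corrections your argument coincides with the proof in the paper.
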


The \emph{meridional rank} of $K$, denoted $\mr(K)$, is the minimal size of a meridional generating set. Theorem \ref{ThmDimBound} produces a lower bound on $\mr(K)$. We will see that this bound is sharp on torus knots and on the $(-2,3,2k+1)$ pretzel knots (see Section \ref{SecTorusknots}). 

The meridional rank is bounded above by the bridge number of a knot. Problem 1.11 of \cite{Kir95}, which is attributed to Cappell and Shaneson, asks whether every knot with meridional rank $n$ is an $n$-bridge knot. The lower bound from Theorem \ref{ThmDimBound} suggests that \augreps might be used to study this problem.

A KCH representation $\rho:\pi_K\to\text{GL}_n\bb C$ induces an augmentation $\epsilon$ of $HC_*(K)$ (see Section \ref{SubsecAugKCHReps}). Moreover, \augreps can be made to have image in SL$_n\bb C$, and from this one sees that the $A$-polynomial divides $\aug_K(\lambda,\mu^2)$ (see section \ref{SecAugReps}; cf.\cite{Ng08}). For $m\in\pi_K$ a meridian of $K$, and $\ell\in\pi_K$ a 0-framed longitude, the values $\epsilon(\mu)$ and $\epsilon(\lambda)$ of the induced augmentation are particular eigenvalues of $\rho(m)$ and $\rho(\ell)$, respectively. With this in mind, we use $n$-dimensional \augreps to define for each $n\ge2$ the \emph{$n$-dimensional $A$-polynomial} $A_K^n(\lambda,\mu)$ of $K$, with zero locus being the closure of points in $(\bb C^*)^2$ given by these eigenvalues. 

If $\rho:\pi_K\to\text{GL}_n\bb C$ is a reducible KCH representation, then there is a KCH representation of lower degree that corresponds to the same point $(\epsilon(\mu),\epsilon(\lambda))$ in the curve of eigenvalues (see Lemma \ref{KCHIrreps}). Hence Theorem \ref{ThmDimBound} implies that the polynomials $A_K^n(\lambda,\mu)$ stabilize for sufficiently large $n$. Define the result to be the \emph{stable $A$-polynomial} $\wt{A}_K(\lambda,\mu)$.\footnote{The term \emph{stable $A$-polynomial} was coined by Stavros Garoufalidis in private communications.} We have that $\wt{A}_K(\lambda,\mu)$ divides $\aug_K(\lambda,\mu)$.

Remarkably, for any knot $K$, all augmentations (with $\epsilon(\mu)\ne1$) arise from a KCH representation. The proof of this fact will appear in a paper that is in preparation \cite{C13b}. In Section \ref{Sec2bridge} we provide a proof in the case that $K$ is a 2-bridge knot, using results developed by Riley on representations of 2-bridge knots.

\begin{thm} Let $K$ be a 2-bridge knot and $\epsilon:HC_0(K)\to\bb C$ an augmentation with $\epsilon(\mu)\ne1$. Then $\epsilon$ is induced from a \augrep $\rho_{\epsilon}:\pi_K\to\text{GL}_2\bb C$.
\label{Thm2bridge}
\end{thm}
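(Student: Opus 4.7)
The natural tool is Riley's parameterization of non-abelian $SL_2\bb C$ representations of 2-bridge knot groups. Recall that if $K$ is a 2-bridge knot then $\pi_K=\langle a,b\mid aw=wb\rangle$ for meridians $a,b$ and a word $w$ built from the 2-bridge data. Riley showed that conjugacy classes of non-abelian representations $\rho:\pi_K\to SL_2\bb C$ with $\text{tr}(\rho(a))=M+M^{-1}$ correspond to roots $y$ of a polynomial $\Phi_K(M,y)\in\ints[M^{\pm1},y]$, via
\[
\rho(a)=\begin{pmatrix}M&1\\0&M^{-1}\end{pmatrix},\qquad \rho(b)=\begin{pmatrix}M&0\\-y&M^{-1}\end{pmatrix}.
\]

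The plan is to construct $\rho_\epsilon$ by twisting such a Riley representation by an abelian character. Given $\epsilon$ with $\mu_0:=\epsilon(\mu)\neq1$ and $\lambda_0:=\epsilon(\lambda)$, pick a square root $M$ of $\mu_0$ and seek $y\in\bb C$ with $\Phi_K(M,y)=0$ so that, after twisting by $a,b\mapsto M$, the resulting representation
\[
\rho_\epsilon(a)=\begin{pmatrix}\mu_0&M\\0&1\end{pmatrix},\qquad \rho_\epsilon(b)=\begin{pmatrix}\mu_0&0\\-yM&1\end{pmatrix}
\]
satisfies $\rho_\epsilon(\ell)e_1=\lambda_0 e_1$ for the 0-framed longitude $\ell$; here the twist leaves the longitude matrix unchanged because $\ell$ is null-homologous. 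This $\rho_\epsilon$ is then automatically a KCH representation: each meridional generator has eigenvalues $\{\mu_0,1\}$, so $\rho_\epsilon(m_i)-I$ has rank 1; the distinguished eigenvector of $\rho_\epsilon(a)$ is $e_1$, with eigenvalue $\mu_0$; and $\ell$ commutes with $a$, so $\rho_\epsilon(\ell)$ acts on $\bb Ce_1$ by a scalar.

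The heart of the argument is a structural description of $HC_0(K)$ for 2-bridge knots. I would compute $HC_0(K)$ from a 2-bridge braid presentation via the Ekholm--Etnyre--Ng--Sullivan DGA and show that, after eliminating the crossing variables, the remaining defining ideal of $HC_0(K)$ (away from $\mu=1$) is generated by a scalar multiple of $\Phi_K(\mu^{1/2},y)$ together with an explicit equation expressing $\lambda$ as a rational function of $\mu$ and $y$. With this in hand, an augmentation $\epsilon$ with $\epsilon(\mu)\neq1$ is determined by a pair $(\mu_0,y_0)$ with $\Phi_K(\mu_0^{1/2},y_0)=0$, and $\lambda_0$ is then forced. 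Using this same $y_0$ in Riley's parameterization produces the candidate $\rho_\epsilon$ above, and the longitude eigenvalue on $e_1$ computed from $\rho_\epsilon(\ell)$ necessarily agrees with $\lambda_0$, since both arise from the same braid-word/representation dictionary.

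Finally, one verifies that the induced augmentation equals $\epsilon$ (not merely agrees on $\lambda,\mu$): the induced augmentation sends each crossing generator of $HC_0(K)$ to a specific function of matrix entries of $\rho_\epsilon$, and by the structural description above this function is determined by $(\mu_0,y_0)$, matching $\epsilon$. The main obstacle is the structural description of $HC_0(K)$ and its explicit identification with Riley's polynomial: the two polynomials arise from completely different sources --- the combinatorics of the braid DGA on one side, the group relation $aw=wb$ on the other --- so reconciling them demands a careful braid-algebra computation exploiting the fact that $K$ has bridge number $2$. Once that identification is in place, the twisting construction and the verification of the KCH peripheral condition are essentially formal.
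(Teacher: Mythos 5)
Your proposal does not yet contain a proof: the step you yourself call ``the heart of the argument'' --- that after eliminating the crossing variables, the defining ideal of $HC_0(K)$ away from $\mu=1$ is generated by (a multiple of) Riley's polynomial together with an equation forcing $\lambda$ --- is precisely the substance of Theorem \ref{Thm2bridge}, and you only announce that you \emph{would} establish it by ``a careful braid-algebra computation,'' acknowledging it as the main obstacle. Granting that identification, the rest is indeed formal: twisting a Riley representation by the abelian character and checking the peripheral/KCH conditions is the easy direction, and it is already how one sees $(\mu^2-1)A_K(\lambda,\mu)\mid\aug_K(\lambda,\mu^2)$ in Section \ref{SecAugReps}. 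But nothing you wrote shows that an arbitrary augmentation with $\epsilon(\mu)\neq1$ yields a root $y_0$ of $\Phi_K$, that $\lambda_0$ is then forced, or that the induced augmentation of the resulting representation agrees with $\epsilon$ on all of $HC_0(K)$ rather than just on $\lambda,\mu$. There is also an uncovered degenerate case: augmentations with $\epsilon([b])=\epsilon([m])$ arise from the abelian (reducible) representation and not from a root of Riley's polynomial, so any dictionary between augmentations and Riley roots must treat them separately, as the paper does.

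For comparison, the paper closes exactly this gap without touching the braid DGA or elimination theory. It works with the knot-group formulation of the cord algebra (Theorem \ref{ThmPi1Cords}), writes down a candidate $2\times2$ representation whose entries are the augmentation values $\epsilon([b]),\epsilon([m])$, and proves by induction on word length that the $(1,1)$-entries of the corresponding matrix products reproduce $\epsilon$ on every word, so that the representation, \emph{if} well defined, induces $\epsilon$. Well-definedness is then reduced, after conjugating into Riley's normal form, to the vanishing of Riley's non-abelian polynomial $\Phi_W=W_{11}+(1-\mu)W_{12}$ under $\epsilon$; this is extracted from the cord-algebra relations via the identity $[b^nw]\ee[b^n]W_{11}-W_{21}$ (Lemma \ref{LemepsAndW}), the palindrome symmetry $W_{21}\ee-([b]-[m])W_{12}$ of the Schubert word, and the group relation $wm=bw$, which together give $0=\epsilon([bw]-[wm])=\epsilon(([b]-[m])\Phi_W)$, hence $\epsilon(\Phi_W)=0$ whenever $\epsilon([b])\neq\epsilon([m])$. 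If you wish to salvage your route, you need an argument of this kind (or an actual elimination computation identifying the augmentation ideal with Riley's polynomial); as written, the crucial implication ``augmentation $\Rightarrow$ Riley root'' is assumed rather than proved.
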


Theorem \ref{Thm2bridge} confirms a conjecture of Ng \cite{Ng08}, that $\aug_K(\lambda,\mu^2)$ is equal to $(1-\mu^2)A_K(\lambda,\mu)$ when $K$ is a 2-bridge knot, where $A_K(\lambda,\mu)$ is the $A$-polynomial of \cite{CCGLS}. On the other hand, torus knots exhibit that there can be irreducible \augreps of high degree.

\begin{thm}Given $1\le p<q$, with $p,q$ relatively prime, let $T(p,q)$ denote the $(p,q)${--}torus knot and $\pi$ its knot group. For every $1\le n\le p$ and each $\mu_0\in\bb C^*$ there is a degree $n$ irreducible \augrep of $\pi$ with $\mu_0$ as an eigenvalue of the meridian $m$. In fact, 
	\[\wt{A}_{T(p,q)}=(\lambda\mu^{pq-q}+(-1)^p)\prod_{n=1}^{p-1}(\lambda^n\mu^{(n-1)pq}-1).\]
\label{Thmtorusknot}
\end{thm}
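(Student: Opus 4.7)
The plan is to exploit the classical presentation $\pi=\langle a,b\mid a^p=b^q\rangle$ of the torus knot group. Fix integers $x,y$ with $qx+py=1$, so the meridian can be taken as $m=a^xb^y$. The central element $z:=a^p=b^q$ lies in the peripheral subgroup, and with the $0$-framed longitude one has $\ell=z\,m^{-pq}$. For any irreducible $\rho:\pi\to \text{GL}_n\bb C$, Schur's lemma forces $\rho(z)=\omega I$ for some $\omega\in\bb C^*$, so the eigenvalues of $\rho(a)$ (resp.\ $\rho(b)$) are $p$-th (resp.\ $q$-th) roots of $\omega$. When $\rho$ is also a \augrep with meridian eigenvalue $\mu$, the operator $\rho(m)$ is a pseudo-reflection, and on its distinguished $\mu$-eigenvector $v$ one computes
\[
\rho(\ell)v=\rho(z)\rho(m)^{-pq}v=\omega\mu^{-pq}v,
\]
so $\lambda=\omega\mu^{-pq}$, equivalently $\omega=\lambda\mu^{pq}$.

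The first key step is a determinant computation. Since $\rho(m)$ has eigenvalues $(\mu,1,\ldots,1)$, $\det\rho(m)=\mu$. Raising $\rho(m)^{pq}$ to determinants, and using $\det\rho(a)^p=\det\rho(b)^q=\omega^n$, yields
\[
\mu^{pq}=(\det\rho(a))^{xpq}(\det\rho(b))^{ypq}=\omega^{n(xq+yp)}=\omega^n.
\]
Substituting $\omega=\lambda\mu^{pq}$ then gives the universal relation $\lambda^n\mu^{(n-1)pq}=1$ satisfied by every irreducible \augrep of dimension $n$.

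Next I would construct, for each $1\le n\le p$ and each $\mu_0\in\bb C^*$, an explicit irreducible \augrep of dimension $n$ realizing $\mu_0$ as meridian eigenvalue. Fix $\omega$ with $\omega^n=\mu_0^{pq}$, take $\rho(a)$ diagonal with $n$ distinct $p$-th roots of $\omega$ as eigenvalues, and choose $\rho(b)$ (with eigenvalues among the $q$-th roots of $\omega$) by a suitable rank-one modification so that $\rho(a)^x\rho(b)^y$ has spectrum $(\mu_0,1,\ldots,1)$. A generic such modification will prevent a common invariant subspace for $\rho(a)$ and $\rho(b)$, ensuring irreducibility. This is the main obstacle of the proof: one must count parameters carefully to verify that the pseudo-reflection condition on $\rho(m)$ leaves a nonempty locus inside the representation variety, and that this locus is not contained in its reducible part, in the spirit of Riley's deformation arguments for $2$-bridge knots.

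Finally, the case $n=p$ is rigid: $\rho(a)$ must use all $p$ of the $p$-th roots of $\omega$ as eigenvalues, so $\det\rho(a)=(-1)^{p-1}\omega$. Repeating the determinant computation but raising to the $q$-th power rather than $pq$ gives $\mu^q=(-1)^{xq(p-1)}\omega$, and a short parity check (using $qx+py=1$ together with the coprimality of $p$ and $q$) reduces this to $\lambda\mu^{pq-q}=(-1)^{p+1}$, the zero locus of the special factor $\lambda\mu^{pq-q}+(-1)^p$. For $1\le n\le p-1$ the constructed subfamilies sweep out the full curves $\lambda^n\mu^{(n-1)pq}=1$. Completeness of the product formula for $\wt{A}_{T(p,q)}$ then follows from Theorem~\ref{ThmDimBound} together with Lemma~\ref{KCHIrreps}: the former bounds the dimension of any irreducible \augrep by $\mr(T(p,q))\le p$, while the latter reduces the reducible case to contributions from lower-dimensional irreducibles, which are already present in the product.
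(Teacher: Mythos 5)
Your overall strategy is the one the paper uses: work with $\langle a,b\mid a^p=b^q\rangle$, apply Schur's lemma to the central element to get $\rho(a^p)=\omega I$, note $\lambda=\omega\mu^{-pq}$, and derive $\omega^n=\mu^{pq}$ from determinants; completeness then comes from Theorem \ref{ThmDimBound} and Lemma \ref{KCHIrreps}. Those necessity computations are correct. The genuine gap is that the heart of the theorem --- that for every $1\le n\le p$, every $\mu_0\in\bb C^*$, and (for $n<p$) every $n$-th root $\omega$ of $\mu_0^{pq}$ there actually exists an irreducible degree-$n$ KCH representation --- is exactly the step you defer as ``the main obstacle.'' Asserting that a ``suitable rank-one modification'' of a diagonal $\rho(b)$ exists and that a ``generic'' one is irreducible is not an argument: the condition that $\rho(a)^x\rho(b)^y$ be the specific pseudo-reflection $\mathrm{diag}[\mu_0,1,\ldots,1]$ cuts out a positive-codimension locus in the variety of pairs with $A^p=B^q=\omega I$, and nonemptiness of that locus (for all $\mu_0$, not just generic $\mu_0$, as the statement requires) is precisely what must be proved. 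The paper does this concretely in Lemmas \ref{LemCharPoly} and \ref{LemTorusKnot}: it writes $\wt Y$ in companion-like form, solves a triangular system so that $\wt Y$ and $M\wt Y$ simultaneously have prescribed characteristic polynomials, and --- the delicate point --- chooses $n$ pairwise distinct $q$-th roots $\zeta_i$ and $p$-th roots $\eta_i$ of $\omega$ with the products $\prod\zeta_i=\mu_0^p$ and $\prod\eta_i=\mu_0^q$ forced by $\det M=\mu_0$; this root-selection is where the case analysis $n\le p-2$, $n=p-1$, $n=p$ (and the forced value $\omega=(-1)^{p-1}\mu_0^q$ at $n=p$) comes from. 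None of that is supplied in your proposal, so the existence half of the theorem is unproven; also, irreducibility of the constructed representations is obtained in the paper from Lemma \ref{KCHIrreps} rather than a genericity claim.

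A second gap: for the factor $\lambda\mu^{pq-q}+(-1)^p$ you assert that a $p$-dimensional KCH irrep must use all $p$ distinct $p$-th roots of $\omega$ as eigenvalues of $\rho(a)$. Your parity computation after that point is fine, but the assertion itself is not justified, and it is where the KCH hypothesis enters: for $p>2$, irreducibility alone does not force $\rho(a)$ to have simple spectrum, and without simplicity your argument only places the degree-$p$ locus on $\lambda^p\mu^{(p-1)pq}=1$, which has branches absent from the stated formula. One way to close it: $\rho(b)^y-\rho(a)^{-x}=\rho(a)^{-x}(\rho(m)-I)$ has rank one, while a common eigenvalue $\alpha$ of $\rho(a)^{-x}$ and $\rho(b)^y$ would satisfy $\alpha^p=\omega^{-x}$ and $\alpha^q=\omega^{y}$, hence $\alpha=\alpha^{qx+py}=1$ and then $\omega=1$; so for $\omega\ne1$ the two spectra are disjoint, and a repeated eigenvalue of $\rho(a)^{-x}$ would, by the rank-one relation, survive as an eigenvalue of $\rho(b)^y$, a contradiction. (The finitely many $\mu_0$ with $\omega=1$ do not affect the closure defining $\wt{A}_{T(p,q)}$.) With that supplement your necessity direction matches the paper's conclusion, but the existence construction still has to be carried out.
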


We remark that the torus knot $T(p,q)$ may be put into bridge position with $\min(p,q)$ bridges. Theorem \ref{Thmtorusknot} produces irreducible \augreps in dimension $\min(p,q)$ and, by Theorem \ref{ThmDimBound}, this implies the meridional rank of $T(p,q)$ is the bridge number (cf. \cite{RZ}). 

Finally, we consider the family of $(-2,3,2k+1)$ pretzel knots, which have a projection as shown in Figure \ref{FigPretzKnot}. The bridge number of these pretzel knots is equal to 3 if $k\ne-1,0$.

\begin{thm} Let $K$ be the $(-2,3,2k+1)$ pretzel knot, where $k\ne -1,0$. Then there is a 3-dimensional, irreducible \augrep of $\pi_K$. Furthermore, $(1-\lambda\mu^{2k+6})$ divides $A_K^3(\lambda,\mu)$.
\label{ThmPretzels}
\end{thm}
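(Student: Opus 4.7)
The plan is to build an explicit one-parameter family of 3-dimensional \augreps of $\pi_K$, parameterized by the meridian eigenvalue $\mu\in\bb C^*$, and then read off the distinguished longitude eigenvalue to identify the claimed factor.

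First, from the bridge projection of $K$ in Figure \ref{FigPretzKnot} I would extract a 3-generator meridional presentation $\pi_K = \langle g_1,g_2,g_3 \mid r_1,r_2\rangle$, with the $g_i$ corresponding to the three bridges and the two relations $r_1,r_2$ arising from the three twist tangles (of $-2$, $3$, and $2k+1$ half-twists). A \augrep sends each $g_i$ to a matrix $A_i\in\text{GL}_3\bb C$ conjugate to $\text{diag}(\mu,1,1)$, so each $A_i$ has the rank-one-perturbation form $A_i = I + (\mu-1)\,v_i u_i^T$ with $u_i^T v_i = 1$. Using simultaneous conjugation and a choice of basis I would normalize $A_1$ and $A_2$ (for instance, so that their fixed hyperplanes are two coordinate hyperplanes), reducing the unknowns to a small number of entries of $A_3$ together with $\mu$.

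Second, I would impose the two pretzel relations. Substituting the parametrized matrices into $r_1,r_2$ produces a polynomial system in the remaining entries and $\mu$. The expectation, guided by the factor $(1-\lambda\mu^{2k+6})$ appearing in the statement, is that this system cuts out a one-parameter family of solutions depending rationally on $\mu$. For generic $\mu$ in that family, irreducibility of the resulting $\rho$ follows from a genericity check: the three hyperplanes fixed by $A_1,A_2,A_3$ should have no common line and no common containing hyperplane, which can be verified directly from the explicit matrices. This gives the first assertion of the theorem.

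Third, to establish the divisibility I would express the $0$-framed longitude $\ell$ as a word in $g_1,g_2,g_3$ read off from the diagram and corrected by the writhe; the exponent $2k+6$ should emerge from combining the twist-region contributions with the writhe of the standard $(-2,3,2k+1)$ diagram. Substituting the matrix family into this word and extracting the eigenvalue of $\rho(\ell)$ on the distinguished $2$-dimensional subspace (the same subspace singled out by the KCH condition on $\rho(m)$) should collapse to $\mu^{-(2k+6)}$. This yields $\lambda\mu^{2k+6}=1$ along the family, so the closure of the induced eigenvalue pairs contains the curve $\{\lambda\mu^{2k+6}=1\}$ and the factor $(1-\lambda\mu^{2k+6})$ divides $A_K^3(\lambda,\mu)$.

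The main obstacle is the second step. Choosing a parametrization in which the pretzel relations collapse to a tractable system, and verifying that a one-parameter family of solutions exists uniformly in $k$ (rather than only for isolated values of $\mu$ or $k$), is the delicate point; the dependence on $k$ must be handled carefully, most likely by recognizing the twist blocks as powers of a fixed $2\times 2$ or $3\times 3$ block and computing them in closed form. Once the matrices are in hand, irreducibility and the longitude eigenvalue follow by symbolic computation, pinned down by the predicted factor.
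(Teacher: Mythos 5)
There are two genuine gaps, and they sit exactly at the two load-bearing points of the theorem. The first is the step you flag yourself: existence of the family. Saying the polynomial system coming from the bridge relations is ``expected'' to cut out a one-parameter family is not an argument; a parameter count does not show that solutions exist for every $k\ne -1,0$ and generic $\mu$, and handling the $k$-dependence is where all the work lies. The paper avoids the three-meridian system entirely: it takes the two-generator presentation $\pi_K=\langle m,w\mid w^kE=Fw^k\rangle$ from \cite{LT12} (with $w$ a product of two meridians), puts the candidate $\rho(w)$ in a companion-type form, makes the specific normalizations $\epsilon([w])=1$, $\epsilon([w^{-1}])=-\mu$ and $y=1+\mu+(\mu^{-1}-1)x$, and then proves (Lemma \ref{LemPretzelsRis0}) that the single scalar equation $(W^{k+1})_{11}=(W^k)_{11}$ already forces the full matrix relation $W^kE_0=F_0W^k$. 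Existence then follows for all $k\ne 0,-1$ simply because $(W^{k+1})_{11}-(W^k)_{11}$ is a polynomial of positive degree in the one remaining unknown $x$. Some reduction of this kind, collapsing the relations to one solvable equation uniformly in $k$, is missing from your plan, and without it the ``delicate point'' is not resolved.

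The second gap is irreducibility. Your proposed criterion --- that the fixed hyperplanes of $A_1,A_2,A_3$ have no common line and no common containing hyperplane --- is not sufficient: writing $A_i=I+(\mu-1)v_iu_i^T$, a subspace $U$ is invariant as soon as, for each $i$, either $U\subseteq\ker u_i^T$ or $v_i\in U$; for instance the line spanned by $v_1$ is invariant whenever it lies in $\ker u_2^T\cap\ker u_3^T$, and analogous two-dimensional configurations exist. Moreover, since the family (yours or the paper's) is only defined implicitly --- $x$ is a root of a polynomial whose degree grows with $k$ --- a ``direct verification from the explicit matrices'' is not actually available, and the same remark applies to your claim that the longitude word ``should collapse'' to $\mu^{-(2k+6)}$: in the paper this is Lemma \ref{LemLongitude}, a substantial cord-algebra computation by induction, not a substitution. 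The paper then proves irreducibility indirectly: if $\rho$ were reducible, Lemma \ref{KCHIrreps} would place the eigenvalue pair $(\lambda,\mu)$ on $A_K^2$ or $A_K^1$, forcing a factor $(1-\lambda\mu^{4k+12})$ in the classical $A$-polynomial, which contradicts Mattman's computation \cite{Mat02} for these pretzel knots. So the longitude eigenvalue is not just the source of the divisibility statement; it is also the input that rules out reducibility, a logical dependence absent from your outline.
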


As a consequence every knot $K$ in this family has meridional rank at least 3, and so the meridional rank and bridge number of the $(-2,3,2k+1)$ pretzel knot agree (cf. \cite{BZ}).

The paper is organized as follows. In Section \ref{KCHreview} we provide a quick review of the background on knot contact homology, recalling an interpretation of the degree zero part as the cord algebra. In Section \ref{SecAugReps} we define the augmentation polynomial and KCH representations, use the cord algebra to see how \augreps induce an augmentation of $HC_*(K)$, and indicate how this produces a factor of the augmentation polynomial equal to the $A$-polynomial. We also prove the dimension bound in Theorem \ref{ThmDimBound}. Section \ref{Sec2bridge} is devoted to the proof of Theorem \ref{Thm2bridge}. In Section \ref{SecTorusknots} we prove Theorem \ref{Thmtorusknot} and Theorem \ref{ThmPretzels}.

\begin{ack} The author would like to thank Lenny Ng for his guidance and for many very helpful conversations.
\end{ack}

\section{Knot contact homology and the cord algebra}
\label{KCHreview}

\subsection{Knot contact homology}
\label{SubsecKCH}

The focus of this paper is on \augreps and the augmentations they induce on a specialization of knot contact homology. Using results of \cite{EENS12}, \cite{Ng08}, and \cite{Ng11HT} augmentations on this specialization may be understood purely through the cord algebra (using the formulation in Theorem \ref{ThmPi1Cords}) which has a topological definition. The reader who so chooses may, in fact, skip the review of the definition of $HC_*(K)$ via Legendrian contact homology and begin with the statement of Theorem \ref{ThmPi1Cords}, taking it as a definition of $HC_0(K)$. 

We define knot contact homology through the tool of Legendrian contact homology (LCH), introduced by Eliashberg and Hofer \cite{Eli98}. Many details on LCH are omitted. 

Being pertinent to the setting of knot contact homology of knots in $\rls^3$, we only discuss contact manifolds $P\times\rls$, where $z$ is the $\rls$ coordinate, $P$ has an exact symplectic form $d\lambda$, and the contact 1-form is $dz-\lambda$. In fact, we are interested in the case of the 1-jet space $J^1(M)$, topologically equal to $T^*M\times\rls$, where $\lambda$ is the canonical 1-form. The proof of the invariance of LCH in $(P\times\rls,\ker(dz-\lambda))$ was carried out in \cite{EES07}.

Let $\Lambda$ be a Legendrian in $(V=T^*M\times\rls,\ker(dz-\lambda))$. A \emph{Reeb chord} is an integral curve to the Reeb field $\bd_z$ that starts and ends on $\Lambda$. We assume that $\Lambda$ has trivial Maslov class and finitely many Reeb chords $a_1,\ldots, a_n$. Let $R$ denote the group ring of $H_2(V,\Lambda)$. 

Define $\cl A$ to be the noncommutative unital algebra over $R$ freely generated by $a_1,\ldots,a_n$. The grading on generators is by the Conley-Zehnder index (minus 1), and the base ring has grading 0. Extend the grading to all of $\cl A$ in the usual way.

The differential $\bd$ on a generator $a_i$ is defined by counting holomorphic disks in a moduli space $\scr M(a_i;a_{j_1},\ldots,a_{j_k})$ modulo an $\rls$-action. After declaring $\bd(r)=0$ for all $r\in R$ and defining $\bd(a_i)$ for each generator, extend $\bd$ to $\cl A$ via the signed Leibniz rule. The definition of $\bd$ on a generator $a_i$ is
\[\bd(a_i)=\sum_{\dim\scr M(a_i;a_{j_1},\ldots,a_{j_k})/\rls=0}\sum_{\D\in\scr M/\rls}\text{sgn}(\D)e^{[\D]}a_{j_1}\cdots a_{j_k}.\]
In the above definition, $\scr M(a_i;a_{j_1},\ldots,a_{j_k})$ is the moduli space of holomorphic disks $\D$ in the symplectization of $V$ (the almost complex structure being compatible and suitably generic) with the following boundary conditions: 
	\en{
	\item[-] The boundary of $\D$ is in the Lagrangian $\rls\times\Lambda$; 
	\item[-] $\D$ has $k+1$ ends, one being asymptotic to $a_i$ at $+\infty$, the other $k$ asymptotic to $a_{j_1},\ldots,a_{j_k}$ at $-\infty$ and in that order as determined by the oriented boundary of the disk; 
	\item[-] sgn$(\D)$ is an orientation sign on $\D$, and $[\D]$ is the homology class of $\D$ in $H_2(V,\Lambda)$ (using a chosen disk for each Reeb chord in $V$, with part boundary on the chord, to cap off $\D$).
	}

By work in \cite{EES07}, $\bd$ makes $\cl A$ into a differential graded algebra and $H_*(\cl A,\bd)$ is a Legendrian invariant of $\Lambda$. 

Given a knot $K$ in $\rls^3$, the unit cotangent bundle $ST^*\rls^3$ with its canonical contact structure is contactomorphic to $J^1(S^2)$. Define $\Lambda_K$ to be the unit conormal bundle over $K$, that is
	\[\Lambda_K=\setn{(q,p)\in ST^*\rls^3}{q\in K\ \text{and}\ \gen{p,v}=0\text{ for }v\in T_qK}.\]
We obtain a Legendrian torus $\Lambda_K$ in $V=ST^*\rls^3$. Define the knot DGA $(\cl A_K,\bd_K)$ to be the differential graded algebra of $\Lambda_K$ described above. Knot contact homology $HC_*(K)$ is defined as the Legendrian contact homology $H_*(\cl A_K,\bd_K)$. Isotopy on $K$ lifts to Legendrian isotopy on $\Lambda_K$, and so $HC_*(K)$ is an invariant of the knot type $K$. 

There is an isomorphsism $H_2(ST^*\rls^3,\Lambda_K)\cong H_2(S^2)\oplus H_1(T^2)$, through which we identify $R$ with the ring $\ints[U^{\pm1},\lambda^{\pm1},\mu^{\pm1}]$ where $U$ is a generator of $H_2(S^2)$ and $\lambda$ (resp. $\mu$) corresponds to a longitude (resp. meridian) of $K$.

This paper focuses on a specialization of $(\cl A_K,\bd_K)$, where multiplication by $U$ is trivial. In this specialization, the degree zero homology $H_0(\cl A_K,\bd_K)$ admits a useful topological description called the cord algebra \cite{Ng08}. Were this story to be paralleled with the full coefficient ring, it would be a hopeful step toward understanding recent computations that show coincidences between the three-variable augmentation polynomial, super-$A$ polynomials in string theory, and colored HOMFLY polynomials. %put in a reference

Specialize the knot DGA so the differential $\bd_K$ only accounts for the homology of $\bd\D\in H_1(\Lambda_K)$. In other words, we take $U=1$ in the base ring. Unless otherwise specified the specialization to $U=1$ is assumed for the remainder of the paper, yet we leave our notation unaltered.

The (framed) knot DGA in \cite{Ng08} was shown to be an invariant of $K$ independently of contact homology, and by \cite{EENS12} is a calculation of $HC_*(K)$ (with $U=1$) that uses a presentation of $K$ as the closure of a braid. We will define the degree zero part of this DGA in order to arrive at the cord algebra, which in turn will lead us to KCH representations. See \cite{NgSurv12} and \cite{Ng11HT} for a definition of the full DGA from this perspective.

Given $n\ge1$, let $A_n$ be the noncommutative unital algebra over $\ints$ freely generated by $n(n-1)$ elements $a_{ij}$, $1\le i\ne j\le n$. Let $B_n$ denote the braid group on $n$ strands. If $\sg_k$ is one of the standard generators, then define $\phi:B_n\to\text{Aut }A_n$ by defining it on each generator as
	\[\phi_{\sg_k}:\begin{cases}
				a_{ij}\mapsto a_{ij},					& i,j\ne k,k+1\\
				a_{k+1,i}\mapsto a_{ki},				&  i\ne k,k+1\\
				a_{i,k+1}\mapsto a_{ik},				& i\ne k,k+1\\
				a_{k,k+1}\mapsto -a_{k+1,k},			&\ \\
				a_{k+1,k}\mapsto -a_{k,k+1},			&\ \\
				a_{ki}\mapsto a_{k+1,i}-a_{k+1,k}a_{ki}	&i\ne k,k+1\\
				a_{ik}\mapsto a_{i,k+1}-a_{ik}a_{k,k+1}	&i\ne k,k+1
				\end{cases}\]

Under an identification of $A_n$ with a certain algebra of homotopy classes of arcs in a punctured disk, the representation $\phi$ corresponds to the mapping class group action (see \cite{NgSurv12}). Inject $\iota:B_n\hookrightarrow B_{n+1}$ by letting the last strand not interact and for $B\in B_n$ let $\phi_B^*=\phi_B\circ\iota$. We may define two matrices $\Phi_B^L, \Phi_B^R\in\text{Mat}_{n\times n}(A_n)$ by
\[\phi_B^*(a_{i,n+1})=\sum_{j=1}^n(\Phi_B^L)_{ij}a_{j,n+1},\]
\[\phi_B^*(a_{n+1,i})=\sum_{j=1}^na_{n+1,j}(\Phi_B^R)_{ji}.\]

Finally, define the matrices ${\bf A}$ by 
\[{\bf A_{ij}}=\begin{cases}a_{ij}	& i<j\\
						-\mu a_{ij}	& i>j\\
						1-\mu		& i=j\end{cases},
\comment{
{\bf\hat A_{ij}}=\begin{cases}Ua_{ij}	& i<j\\
						-\mu a_{ij}	& i>j\\
						U-\mu		& i=j\end{cases},
						}\]
and the diagonal matrix ${\bf\Lambda}=\text{diag}[\lambda\mu^w,1,\ldots,1]$, where $w$ is the writhe of the braid $B\in B_n$. The following gives a concrete algebraic interpretation of $HC_0(K)$.

\begin{thm}[see \cite{NgSurv12}, \S 4] Let $R_0=\ints[\lambda^{\pm1},\mu^{\pm1}]$ and suppose $K$ is the closure of a braid $B\in B_n$. Then
	\[HC_0(K)\cong (A_n\otimes R_0)\big/\cl I,\]
	where $\cl I$ is the ideal generated by entries in the matrices ${\bf A}-{\bf\Lambda}\cdot\phi_B({\bf A})\cdot{\bf\Lambda}^{-1}, {\bf A}-{\bf\Lambda}\cdot\Phi^L_B\cdot{\bf A}$, and ${\bf A}-{\bf A}\cdot\Phi^R_B\cdot{\bf\Lambda}^{-1}$.
\label{ThmHC0}
\end{thm}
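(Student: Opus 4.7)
The plan is to use the cord algebra model of $HC_0(K)$ (available for any $K\subset\rls^3$ by the cited work of Ng), present $K=\hat B$ as a braid closure, choose a disk $D$ transverse to every strand of the braid, and express both the generators and relations of the cord algebra in terms of cords supported on $D$.

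For generators, I would argue that every cord can be isotoped so that both endpoints lie on $K\cap D$, and two such endpoints can then be joined by an arc in $D$, showing that the $n(n-1)$ elements $a_{ij}$ (arcs from strand $i$ to strand $j$ in $D$) generate the cord algebra over $R_0$. The structure of the matrix ${\bf A}$ encodes the cord skein relation: a cord passing over (resp. under) a strand contributes $a_{ij}$ (resp. $-\mu\,a_{ij}$) after resolution across the meridian disk of that strand, and the degenerate cord with coincident endpoints on one strand reduces to $(1-\mu)$ times the empty cord, giving the diagonal entries.

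For relations, three distinct isotopies produce the three matrix equations. The first is pushing $D$ once around the braid axis: this applies $\phi_B$ to every generator $a_{ij}$, and the conjugation by ${\bf \Lambda}=\text{diag}[\lambda\mu^w,1,\ldots,1]$ records our choice to run the longitude along the first strand (with the $\mu^w$ factor correcting the writhe-framing discrepancy), giving ${\bf A}={\bf \Lambda}\cdot\phi_B({\bf A})\cdot{\bf \Lambda}^{-1}$. The second and third come from pushing only a single endpoint of a cord around $K$. To track this, I would embed $B\in B_{n+1}$ via the trivial last strand and consider cords with one endpoint on the new $(n{+}1)$st strand; by construction $\phi_B^*(a_{i,n+1})=\sum_j(\Phi^L_B)_{ij}a_{j,n+1}$, so a rearrangement accounting for a full longitude traversal on the distinguished strand yields ${\bf A}={\bf \Lambda}\cdot\Phi^L_B\cdot{\bf A}$, and the mirror argument on the right gives ${\bf A}={\bf A}\cdot\Phi^R_B\cdot{\bf \Lambda}^{-1}$.

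The main obstacle I anticipate is completeness: showing that the ideal $\cl I$ generated by the entries of these three matrix equations exhausts \emph{all} relations in $HC_0(K)$, rather than producing only a proper sub-ideal. For this, one would either invoke the identification of $HC_0(K)$ with the combinatorial braid algebra of \cite{Ng08} proved in \cite{EENS12}, or argue directly by decomposing every cord isotopy in the braid complement into a finite sequence of elementary moves and checking that each such move matches an entry of one of the three matrix relations above. Carefully tracking the framing data encoded in ${\bf \Lambda}$ — in particular, that the $\mu^w$ twist appears exactly once and only on the distinguished strand — is the most delicate bookkeeping in the argument.
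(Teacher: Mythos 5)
Your outline does not actually prove the statement in question; what it sketches is the topological half of the story, namely the identification of the braid-theoretic algebra $(A_n\otimes R_0)/\cl I$ with the cord algebra $\cl C_K$ via arcs in a page of the open book. That is Theorem \ref{ThmCords} (due to \cite{Ng08}), and your disk argument is the standard route to it. But Theorem \ref{ThmHC0} asserts an isomorphism with $HC_0(K)$, which in this paper is \emph{defined} as the degree-zero Legendrian contact homology of the conormal torus $\Lambda_K\subset ST^*\rls^3$, i.e.\ the homology of a DGA whose differential counts holomorphic disks. (The paper itself does not prove the theorem; it imports it from \cite{NgSurv12}, \S 4.)

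The genuine gap is therefore your treatment of "completeness." You offer two allegedly interchangeable options: invoke \cite{EENS12}, or decompose every cord isotopy into elementary moves and match them against entries of the three matrix relations. These are not interchangeable. The elementary-move analysis, however carefully done, only yields the cord-algebra identification and never touches holomorphic curves; the bridge from the combinatorial algebra to $HC_0(K)$ is precisely the analytic content of \cite{EENS12} (computing the LCH DGA of $\Lambda_K$ from a braid presentation and matching it with the framed knot DGA), together with the invariance of that combinatorial DGA established in \cite{Ng08}. Moreover, your opening premise --- that "the cord algebra model of $HC_0(K)$ is available by the cited work of Ng" --- already presupposes exactly this identification, so the proposed direct verification is circular as a proof of Theorem \ref{ThmHC0}: either you assume the hard step at the outset or you must cite it, but you cannot replace it with cord bookkeeping. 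A minor further point: the $1-\mu$ diagonal entries and the $-\mu$ factors in ${\bf A}$ reflect the basepoint and normalization conventions in the cord relations (1)--(3), not an over/under resolution of a cord across a strand, so that part of your bookkeeping would need restating even for the cord-algebra half.
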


\subsection{The cord algebra}
\label{SubsecCordAlg}

Let $K\subset S^3$ be an oriented knot with a basepoint $\ast\in K$. A \emph{cord} of $(K,\ast)$ is a path $\gamma:[0,1]\to S^3$ with $\gamma^{-1}(\ast)=\emptyset$ and $\gamma^{-1}(K)=\set{0,1}$. Define the \emph{cord algebra} $\cl C_K$ to be the unital tensor algebra over $R_0$ freely generated by homotopy classes of cords (note that endpoints of cords may move in the homotopy, if they avoid $\ast$), modulo the ideal generated by the relations:

	\begin{tikzpicture}[scale=0.5,>=stealth]
%%% first relation	
	\draw(-3,0) node {(1)}; 
	\draw[very thick,->](1,1)--(-1,-1);
	\draw[gray,->](0,0) ..controls (-1.5,0)and(0,1.5).. (0,0);
	\draw(2,-.25) node {$=$};
	\draw(3.5,-.25) node {$1-\mu$;};

%%% second relation, away	
	\draw(-3,-3) node {(2)}; 
	\draw[very thick,->](1,-2)--(-1,-4);
	\draw[gray,->](0.35,0.35-3) -- (-1,-2);
	\filldraw(0,-3) circle (3pt);
	\draw[very thick](.4,-3.4) node {{\huge$\ast$}};
	
	\draw(2,-.25-3) node {$=$};

	\draw(3,-.25-3) node {$\lambda$};
	\draw[very thick,->](5.5,-2)--(3.5,-4);
	\draw[gray,->](4.15,-0.35-3) -- (3.5,-2);
	\filldraw(4.5,-3) circle (3pt);
	\draw[very thick](4.9,-3.4) node {{\huge$\ast$}};

	\draw (6.5,-.25-3) node {and};
%%% second relation, toward	
	\draw[very thick,->](9+1,-2)--(9-1,-4);
	\draw[gray,<-](9-0.35,-0.35-3) -- (9-1,-2);
	\filldraw(9,-3) circle (3pt);
	\draw[very thick](9.4,-3.4) node {{\huge$\ast$}};
	
	\draw(9+2,-.25-3) node {$=$};

	\draw(9+3,-.25-3) node {$\lambda$};
	\draw[very thick,->](9+5.5,-2)--(9+3.5,-4);
	\draw[gray,<-](9+4.85,0.35-3) -- (9+3.5,-2);
	\filldraw(9+4.5,-3) circle (3pt);
	\draw[very thick](9+4.9,-3.4) node {{\huge$\ast$}};
	
%%% third relation
	\draw(-3,-6) node {(3)}; 
	\draw[very thick,->](1,-5)--(-1,-7);
	\draw[->, white,line width=3pt](-1,-5)--(1,-7);
	\draw[->, gray](-1,-5)--(1,-7);	
	\draw(2,-.25-6) node {$-$};
	\draw(3,-.25-6) node {$\mu$};
	\draw[->,gray](3.5,-5)--(5.5,-7);
	\draw[->,very thick,draw=white,double=black,double distance=1pt](5.5,-5)--(3.5,-7);
	\draw(6.5,-.25-6)node{$=$};
	\draw[very thick,->](9,-5)--(7,-7);
	\draw[gray,->](7,-5)--(8,-6);
	\filldraw(9.5,-6) circle (1pt);
	\draw[very thick,->](12,-5)--(10,-7);
	\draw[gray,->](11,-6)--(12,-7);
	
	\end{tikzpicture}

\begin{rem} In the relations on $\cl C_K$ depicted above the thicker, dark curves are part of the knot $K$ and the thinner curves are part of a cord. Moreover, the relations are understood to be in $\rls^3$ and not just as planar diagrams.
\end{rem}

The cord algebra $\cl C_K$ is manifestly a knot invariant. Moreover, the isomorphism in Theorem \ref{ThmHC0} was used in \cite{Ng08} to prove that the cord algebra is isomorphic to $HC_0(K)$.
	
\begin{thm}[\cite{Ng08}] $\cl C_K$ is isomorphic as an $R_0$ algebra to $(A_n\otimes R_0)\big/\cl I$, and thus to $HC_0(K)$.
\label{ThmCords}
\end{thm}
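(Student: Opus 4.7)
The plan is to establish the isomorphism by constructing an explicit geometric map from the braid-theoretic presentation $(A_n \otimes R_0)/\mathcal{I}$ into $\mathcal{C}_K$ and verifying it is an $R_0$-algebra isomorphism. Fix a braid $B \in B_n$ whose closure is $K$, together with a braid axis $\alpha$ and a meridional disk $D$ transverse to $K$ at $n$ points $p_1,\ldots,p_n$; place the basepoint $\ast$ on $K$ just before the strand passing through $p_1$ (in the direction of the braid orientation). Define $\Psi: A_n \otimes R_0 \to \mathcal{C}_K$ by sending $\lambda,\mu$ to themselves and sending $a_{ij}$ to (the class of) the straight cord in $D$ from $p_i$ to $p_j$. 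The goal is to show that $\Psi$ descends to the quotient by $\mathcal{I}$ and is a bijection.

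First I would verify the defining relations of $\mathcal{I}$ geometrically. For each ${\bf A}-{\bf\Lambda}\cdot \phi_B({\bf A})\cdot {\bf\Lambda}^{-1}$ equation, push a straight cord $a_{ij}$ in $D$ through the braid $B$ and isotope it back to a disk at the other end: the cord algebra skein relation (3) forces the outcome to be $\phi_{\sigma_k}$ on each elementary generator crossed, while the conjugation by ${\bf\Lambda}=\mathrm{diag}[\lambda\mu^w,1,\ldots,1]$ records the basepoint crossings (relation (2)) and the writhe contributions at the first strand. For the ${\bf A}-{\bf\Lambda}\cdot\Phi_B^L\cdot{\bf A}$ and ${\bf A}-{\bf A}\cdot\Phi_B^R\cdot{\bf\Lambda}^{-1}$ relations, one interprets $\Phi_B^L$ and $\Phi_B^R$ as the $a_{n+1}$-coefficients of $\phi_B^\ast$ acting on cords with one endpoint on a freely moving strand; these equations then correspond to sliding one endpoint of a cord once around the knot via a longitude, where relation (2) contributes $\lambda$ and relation (3) expands the slid cord in terms of $a_{ij}$'s and products with $a_{i,n+1}$ (or $a_{n+1,i}$). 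Relation (1) enters by normalizing the trivial (``contractible'') cord.

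To establish that $\Psi$ is surjective, I would show that every cord in $S^3$ can be isotoped, after repeated application of (3), to a $\ints[\mu^{\pm1},\lambda^{\pm1}]$-linear combination of cords lying inside $D$: a generic cord meets the braided strands transversely in the vertical projection, and each such crossing is eliminated by (3), while intersections with a longitude (passing by $\ast$) are resolved by (2). Since any cord in $D$ is homotopic rel $\ast$ to one of the generating straight cords representing $a_{ij}$ (or to a constant cord handled by (1)), this gives surjectivity. For injectivity I would construct an inverse by using an intermediate model: the calculation of $HC_0$ from the braid presentation in Theorem~\ref{ThmHC0} and the cord-algebra interpretation of the $a_{ij}$ in the punctured disk via the mapping class group action described in \S\ref{SubsecKCH}; the braid-group action $\phi$ is by construction the one induced on cords in $D$ by the monodromy of $B$, so a well-defined inverse $\Psi^{-1}$ on generators exists and is a two-sided inverse on relators.

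The main obstacle is the bookkeeping for relation (2) and the $\lambda$-factors: one must verify with care that the basepoint $\ast$ and the conjugation by ${\bf\Lambda}$ agree not only up to sign but in a way consistent with the writhe normalization, so that the three matrix relations in $\mathcal{I}$ map to exactly the skein identities in $\mathcal{C}_K$ rather than to some twisted version of them. The rest, in particular the reduction of arbitrary cords to combinations of $a_{ij}$'s, is a routine but lengthy Reidemeister-style argument once relation (3) is in hand.
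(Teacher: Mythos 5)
Your overall route is the same one that underlies this theorem (which the paper quotes from Ng rather than reproves): identify the generators $a_{ij}$ with arcs in a page $D$ of the braid-axis open book, check that the entries of the three matrices generating $\mathcal{I}$ are killed by the map to cords (the skein relation (3) for pushing arcs through crossings of the braid, relation (2) producing the $\lambda\mu^w$ factors at the basepoint strand, and the $\Phi^L_B,\Phi^R_B$ relations for homotopies that cross the braid axis, i.e.\ slide an endpoint around the closure), and obtain surjectivity by resolving an arbitrary cord into arcs in $D$ via (2) and (3). At the level of an outline those steps match the intended argument and are fine.

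The genuine gap is the injectivity of the descended map, which is the actual content of the theorem: one must show that the kernel of $\Psi$ is no larger than $\mathcal{I}$, i.e.\ that \emph{every} relation among cords in $S^3$ is generated by the listed ones. Your proposed inverse does not deliver this. Theorem \ref{ThmHC0} identifies $(A_n\otimes R_0)/\mathcal{I}$ with $HC_0(K)$ through the braid DGA and says nothing about cords, so it furnishes no map out of $\mathcal{C}_K$; invoking it here is circular, since the link between cords and $HC_0(K)$ is exactly what Theorem \ref{ThmCords} is supposed to provide. Likewise, the mapping-class-group identification of $A_n$ with arcs in the punctured disk only reconstructs the forward map $\Psi$ on generators. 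To define $\Psi^{-1}$ you would have to assign to an arbitrary homotopy class of cord an element of $(A_n\otimes R_0)/\mathcal{I}$ (a cord need not lie in $D$, and is only expressible as a linear combination of arcs of $D$ after applying the relations) and then prove the assignment independent of all choices: the isotopy pushing the cord into the page, the order in which intersections with $K$, with $\ast$, and with the braid axis are resolved, and, crucially, invariance under homotopy of the cord in $S^3$. That requires an analysis of generic one-parameter families of cords, checking that the finitely many events they pass through (tangencies to $K$, passages through the basepoint, passages through the axis) are exactly accounted for by the $\phi_B$-conjugation relation and the $\Phi^L_B,\Phi^R_B$ relations. This well-definedness argument is the heart of the proof and is missing from your plan; without it the claim that ``a well-defined inverse $\Psi^{-1}$ on generators exists'' is an assertion of the theorem, not a proof of it.
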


Finally, it is useful to reformulate the cord algebra $\cl C_K$ in terms of the knot group. Given a cord in $\cl C_K$, and $X_K=\overline{\rls^3\setminus n(K)}$ the complement of $K$, push the endpoints slightly off of $K$ so the cord is in $X_K$ and connect the endpoints via a longitudinal curve on $\bd n(K)$ to get an element of $\pi_K=\pi_1(X_K)$.

\begin{thm}[\cite{Ng08}] Let $P_K$ denote the underlying set of the knot group $\pi_K$, where we write $[\gamma]\in P_K$ for $\gamma\in\pi_K$. Let $e$ denote the identity, $m$ a choice of meridian, and $\ell$ the 0-framed longitude of $K$. The cord algebra $HC_0(K)$ is isomorphic to the noncommutative unital algebra freely generated by elements of $P_K$ over $R_0=\ints[\lambda^{\pm1},\mu^{\pm1}]$ modulo the relations:
	\en{
	\item $[e]=1-\mu$;
	\item $[m\gamma]=[\gamma m]=\mu[\gamma]$ and $[\ell\gamma]=[\gamma\ell]=\lambda[\gamma]$, for any $\gamma\in\pi_K$;
	\item $[\gamma_1\gamma_2]-[\gamma_1m\gamma_2]=[\gamma_1][\gamma_2]$ for any $\gamma_1,\gamma_2\in\pi_K$.
	}
\label{ThmPi1Cords}
\end{thm}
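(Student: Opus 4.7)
The plan is to exhibit mutually inverse $R_0$-algebra maps between $\cl C_K$ and the $R_0$-algebra $\cl B$ defined by the right-hand side of the asserted isomorphism. The geometric bridge is the observation that a cord $c\subset S^3$, after pushing its endpoints slightly off $K$ into $X_K$ and closing the resulting arc by a longitudinal arc on $\bd n(K)$ that avoids $\ast$, yields a well-defined element $\gamma_c\in\pi_K$.

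First, define $\Psi:\cl C_K\to\cl B$ by extending $c\mapsto[\gamma_c]$ multiplicatively and $R_0$-linearly. The main task is to verify that the three cord relations from Section \ref{SubsecCordAlg} map to the defining relations of $\cl B$. Relation (1) corresponds to the fact that the two ways of pushing the endpoints of a contractible cord off $K$ produce loops differing by a meridian: one resolution contributes $1$ and the other contributes $\mu$, matching $[e]=1-\mu$. Relation (2) translates to $[\ell\gamma]=\lambda[\gamma]$, because dragging a cord past $\ast$ winds the longitudinal closure once around $\ell$. Relation (3) translates to $[\gamma_1\gamma_2]-[\gamma_1 m\gamma_2]=[\gamma_1][\gamma_2]$, since the two resolutions of a transverse crossing between cords $c_1$ and $c_2$ correspond in $\pi_K$ to the concatenation of closures with or without an intervening meridional wrap.

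Next, construct the inverse $\Phi:\cl B\to\cl C_K$ by choosing, for each $\gamma\in\pi_K$, a loop representative based at a point on $\bd n(K)$, cutting it once (away from $\ast$) and pushing the cut endpoints onto $K$ to obtain a cord. The defining relations of $\cl B$ then hold in $\cl C_K$: relation (1) is cord relation (1), relation (2) is cord relation (2), and relation (3) follows from cord relation (3) applied at the junction of the concatenated cords. The identities $[m\gamma]=[\gamma m]=\mu[\gamma]$ should drop out from the fact that changing the cut point of the loop by a meridional wrap picks up a factor of $\mu$ on closure. Mutual inverseness of $\Phi$ and $\Psi$ then follows directly from the constructions.

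The main obstacle is well-definedness of $\Phi$: different loop representatives of a fixed $\gamma\in\pi_K$ are connected by homotopies in $X_K$ whose intermediate arcs can pass transversely through $K$ or cross $\ast$, and each such event must be tracked to produce the same cord algebra element. A cleaner route is probably to establish the isomorphism by showing directly that $\Psi$ is surjective and that its kernel is generated by the images of relations (1)--(3), thereby bypassing an independent construction of $\Phi$.
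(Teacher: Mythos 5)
The paper itself offers no proof of Theorem \ref{ThmPi1Cords}; it is quoted from \cite{Ng08}, so your attempt has to be judged against the standard argument rather than against anything in this text. Your geometric dictionary (cord $\leftrightarrow$ based loop obtained by pushing the endpoints to $\bd n(K)$ and closing up along the longitude) is the right one, and it is exactly the correspondence the paper gestures at just before the statement. But as written the proposal is an outline with the central step missing: what makes the theorem nontrivial is not the existence of $\Psi$ but the completeness of relations (1)--(3), i.e.\ that $\Phi$ descends to the quotient and is inverse to $\Psi$. Your final paragraph concedes precisely this point and defers it (``show the kernel is generated by the images of the relations''), which is the same hard step in different clothing; nothing in the proposal actually establishes it.

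Several of the one-line verifications are also off. Cord relation (1) involves no ``two resolutions'': once a push-off convention is fixed, the small cord maps to $[e]$ and the relation becomes $[e]=1-\mu$ directly. More importantly, the identities $[m\gamma]=[\gamma m]=\mu[\gamma]$ do not simply ``drop out from changing the cut point'': to verify them in $\cl C_K$ (which is needed for $\Phi$ to be well defined) one must combine the skein relation (3) with relation (1) to trade a meridional wrap near a cord endpoint for the factor $\mu$; this is where the normalizations $1-\mu$ and the $\mu$ in (3) actually enter, and skipping it is skipping the substance of the theorem. The map $\Psi$ itself also needs a convention: ``push the endpoints slightly off $K$ and close by a longitudinal arc'' produces a free loop and determines $\gamma_c$ only up to basepoint and up to left/right insertions of powers of $m$ (hence up to units $\mu^{\pm1}$, $\lambda^{\pm1}$ in the target); you must fix a basepoint on $\bd n(K)$ near $\ast$ and a definite push-off so that $\Psi$ and $\Phi$ can be checked to be mutually inverse on the nose. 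Finally, the difficulty you flag for $\Phi$ --- homotopies whose intermediate arcs ``pass transversely through $K$ or cross $\ast$'' --- is not the real issue: based homotopies of loops take place in $X_K$ and never meet $K$ or $\ast$; the genuine issues are the relation checks and the inverse verification just described.
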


It is this last interpretation of $HC_0(K)$ that will indicate how \augreps induce an augmentation.

\section{\augreps and dimension bounds}
\label{SecAugReps}

\subsection{Augmentations of $HC_0(K)$ and \augreps}
\label{SubsecAugKCHReps}

Given a unital ring $S$, an \emph{$S$-augmentation} of $(\cl A_K,\bd_K)$ is a graded algebra map $\epsilon:\cl A_K\to S$ such that $\epsilon\circ\bd=0$. Since $\cl A_K$ is supported in non-negative grading, such maps are in bijective correspondence with algebra maps $\epsilon:HC_0(K)\to S$. We fix $S=\bb C$ and refer to a $\bb C$-augmentation of $(\cl A_K,\bd_K)$ simply as an augmentation of $HC_*(K)$.

Consider the set in $(\bb C^*)^2$ given by
\[V_K=\setn{(\epsilon(\lambda),\epsilon(\mu))}{\epsilon\text{ is an augmentation of }HC_*(K)}.\]

\begin{conj} The maximum dimensional component of the closure of $V_K$ is 1-dimensional for all knots $K\subset\rls^3$.
\label{VK}
\end{conj}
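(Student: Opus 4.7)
My plan is to reduce Conjecture \ref{VK} to the statement that the Zariski closure $\overline{V_K}$ has Krull dimension exactly one in $(\bb C^*)^2$. Since any one-dimensional closed algebraic subset of a smooth surface is a hypersurface, hence the zero locus of a single Laurent polynomial, this would immediately furnish the augmentation polynomial $\aug_K(\lambda,\mu)$ as the defining equation of the top-dimensional part of $\overline{V_K}$.

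Using Theorem \ref{ThmHC0}, fix a braid $B \in B_n$ whose closure is $K$ and identify the set of $\bb C$-augmentations with the affine variety $W_K \subset \bb C^{n(n-1)} \times (\bb C^*)^2$ cut out by the entries of the matrix relations that generate $\cl I$. The set $V_K$ is the image of $W_K$ under projection to the $(\lambda,\mu)$-factor; by Chevalley's theorem this image is constructible, so $\overline{V_K}$ is an algebraic subset of $(\bb C^*)^2$. A lower bound $\dim \overline{V_K} \ge 1$ is immediate: for each $t \in \bb C^*$ the abelian character $\pi_K \to \bb C^*$ sending a meridian to $t$ is a one-dimensional \augrep{}inducing an augmentation with $\epsilon(\mu) = t$ and $\epsilon(\lambda) = 1$, since the longitude $\ell$ lies in the commutator subgroup of $\pi_K$. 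Hence the line $\{\lambda = 1\}$ is contained in $V_K$.

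The real content of the conjecture is the upper bound $\dim \overline{V_K} \le 1$. My preferred route uses the forthcoming result of \cite{C13b}, announced in the introduction, that every augmentation with $\epsilon(\mu) \ne 1$ is induced by a \augrep{}of $\pi_K$. Granting this, Theorem \ref{ThmDimBound} implies that every irreducible \augrep{}has dimension at most $\mr(K)$, which is itself bounded by the bridge number. Then $V_K \setminus \{\mu = 1\}$ is the finite union, for $1 \le n \le \mr(K)$, of the meridian/longitude eigenvalue images of the $n$-dimensional KCH representation varieties inside $\text{Hom}(\pi_K, \text{GL}_n\bb C)$; by the definition of $A_K^n$, each such image has closure contained in the zero locus of $A_K^n(\lambda,\mu)$. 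Hence $\overline{V_K}$ lies in the zero locus of the nonzero Laurent polynomial $(1-\mu)\prod_{n=1}^{\mr(K)} A_K^n(\lambda,\mu)$, a one-dimensional subset of $(\bb C^*)^2$.

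The main obstacle is therefore proving the claim from \cite{C13b} itself. In its absence, one would instead argue directly on $W_K$, showing that for generic $\mu_0 \in \bb C^*$ the fiber of the projection $V_K \to \bb C^*$ to the $\mu$-coordinate over $\mu_0$ is finite. This amounts to an elimination-theoretic statement for the noncommutative ideal $\cl I$; while the structural redundancies in $\cl I$ already visible in the commutative $A$-polynomial case ought to force the correct fiber dimension, extracting them without the representation-theoretic viewpoint of KCH representations appears significantly harder than the route through \cite{C13b}.
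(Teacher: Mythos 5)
First, note that the paper does not prove this statement: it is stated as Conjecture \ref{VK} and is used only to \emph{define} the augmentation polynomial, so there is no proof in the paper to compare yours against; within the paper only the 2-bridge case is effectively settled (via Theorem \ref{Thm2bridge} and the known properties of the classical $A$-polynomial), and the general case is deferred in part to the announced work \cite{C13b}.

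Your reduction has a genuine gap beyond the acknowledged reliance on \cite{C13b}. The preliminary steps are fine: augmentations form an affine algebraic set, $V_K$ is constructible by Chevalley, the abelian representations give $\{\lambda=1\}\subset V_K$ so $\dim\overline{V_K}\ge 1$, and a pure codimension-one closed subset of $(\bb C^*)^2$ is principal because $\bb C[\lambda^{\pm1},\mu^{\pm1}]$ is a UFD. The problem is the upper bound. Even granting \cite{C13b} and the bound $n\le \mr(K)$ from Theorem \ref{ThmDimBound} together with Lemma \ref{KCHIrreps}, you conclude by saying that each $U_K^n$ has closure contained in the zero locus of $A_K^n(\lambda,\mu)$ ``by the definition of $A_K^n$.'' This is circular: $A_K^n$ is defined from $U_K^n$ in exactly the same way $\aug_K$ is defined from $V_K$, so a nonzero such polynomial exists only if $\overline{U_K^n}$ is already known to be a proper (at most one-dimensional) subvariety of $(\bb C^*)^2$ --- which is precisely the content of the conjecture for the KCH-induced part of $V_K$. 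What is missing is an argument that the eigenvalue map from the (constructible) locus of degree-$n$ KCH representations in $\text{Hom}(\pi_K,\text{GL}_n\bb C)$ to $(\lambda_0,\mu_0)\in(\bb C^*)^2$ never has Zariski-dense image. For $n=2$ this is a nontrivial theorem underlying \cite{CCGLS} (via the ``half lives, half dies'' constraint on restriction to the boundary torus); for general $n$ no analogue is proved here, and your final paragraph's fallback (generic finiteness of the fibers of $V_K\to\bb C^*$) simply restates this unproved dimension bound rather than supplying it. So the proposal is a conditional reduction, not a proof, and the key analytic/geometric input remains open exactly where the paper leaves it open.
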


The closure of $V_K$ contains a 1-dimensional component (the lines $\mu=1$ and $\lambda=1$; see below). If Conjecture \ref{VK} holds and there is no 2-dimensional component, let the \emph{augmentation polynomial} $\aug_K(\lambda,\mu)$ be the corresponding reduced polynomial, where \emph{reduced} means that $\aug_K(\lambda,\mu)$ has no repeated factors. This only defines $\aug_K(\lambda,\mu)$ up to a unit in $\bb C[\lambda^{\pm1},\mu^{\pm1}]$; however, we may take $\aug_K(\lambda,\mu)\in\ints[\lambda^{\pm1},\mu^{\pm1}]$ (and with coefficients that are coprime), as $\bd_K$ involves only $\ints$ coefficients. Further, we can make the polynomial not divisible by $\lambda,\mu$ (and contain no negative exponents of $\lambda,\mu$) by multiplying by an appropriate power of $\lambda$ and $\mu$. The result is well-defined up to overall sign. 

We are now prepared to discuss \augreps and their relation to augmentations. We use notation for the knot group and its elements that agrees with the statement of Theorem \ref{ThmPi1Cords}. 

\begin{defn} Let $V$ be a complex vector space with dimension $n$. We say that a homomorphism $\rho:\pi_K\to\text{GL}(V)$ is a \emph{\augrep of $\pi_K$ of degree $n$} if for a meridian $m$ of $K$, the map $\rho(m)$ is diagonalizable and has 1 as an eigenvalue with multiplicity $n-1$. We call $\rho$ a \emph{KCH irrep} if it is irreducible as a representation.
\label{DefKCHrep}
\end{defn}

We often identify $V$ with $\bb C^n$ via a basis $\{e_1,\ldots,e_n\}$ of eigenvectors of $\rho(m)$. In this notation $e_1$ denotes the eigenvector with eigenvalue $\mu_0\ne1$. Note that the diagonal matrix $\text{diag}[\mu_0,1,\ldots,1]$ represents $\rho(m)$ in this basis. 

\begin{rem}When $\pi_K$ is generated by two elements $\{m,g\}$, and $m$ is a meridian, then for KCH irreps a judicious choice of basis makes the matrix for $\rho(g)$ simple. Starting with $\{e_1,\ldots,e_n\}$, note that $\rho(g)e_1$ is not in $\bb C\langle e_1\rangle$. Subtract the $e_1$ part to get $e_2'=\rho(g)e_1-x_1e_1$, an eigenvector of $\rho(m)$ with eigenvalue 1. Continue, defining $e_i'$ by subtracting the $e_1$ part from $\rho(g)e_{i-1}'$. The irreducibility of $\rho$ guarantees $\{e_1,e_2',\ldots,e_n'\}$ is a basis. Furthermore $\rho(m)=\text{diag}[\mu_0,1,\ldots,1]$ and $$\rho(g)=\begin{pmatrix}x_1&\ldots&x_n \\ & \text{Id}_{n-1} & \vdots \\ & & x_{2n+1}\end{pmatrix}$$ in this basis.
\label{Rem2genForm}
\end{rem}

As $\ell$ and $m$ commute, $e_1$ is an eigenvector for $\rho(\ell)$. Write $\lambda_0$ for the corresponding eigenvalue. Identify $HC_0(K)$ with the algebra described in Theorem \ref{ThmPi1Cords}.

\begin{defn} With notation as above, let $\rho:\pi_K\to\text{GL}(V)$ be a \augrep of degree $n$ with an inner product $\langle\ ,\ \rangle$ on $V$ so that $\set{e_1,\ldots,e_n}$ is orthonormal. Define $\epsilon_\rho(\mu)=\mu_0$, $\epsilon_\rho(\lambda)=\lambda_0$, and $\epsilon_\rho([\gamma]) = (1-\mu_0)\langle\rho(\gamma)e_1,e_1\rangle$ for any $\gamma\in\pi_K$. This determines a map $\epsilon_\rho:HC_0(K)\to\bb C$ called the \emph{induced augmentation of $\rho$}.
\label{DefnInducedAug}
\end{defn}

\begin{thm}[\cite{NgSurv12}] For any \augrep $\rho:\pi_K\to\text{GL}(V)$ the induced augmentation is well-defined.
\label{ThmKCHReps}
\end{thm}

Theorem \ref{ThmKCHReps} appears as an exercise in \cite{NgSurv12}. The proof is as follows.

\begin{proof} 
We check that $\epsilon_\rho$ is well-defined by checking the relations of Theorem \ref{ThmPi1Cords}. Relation (1) is trivial since $\rho(e)$ is the identity. To check (2), note that, in the basis $\set{e_1,\ldots,e_n}$, the first row of the matrix for $\rho(m)\rho(\gamma)$ is $\mu_0$ times the first row of $\rho(\gamma)$. Thus for any $\gamma\in\pi_K$,
	\al{
	\epsilon_\rho([m\gamma])
	&=(1-\mu_0)\langle\rho(m)\rho(\gamma)e_1,e_1\rangle\\
	&=(1-\mu_0)\mu_0\langle\rho(\gamma)e_1,e_1\rangle\\
&=\epsilon_{\rho}(\mu[\gamma]),\text{ by definition.}
	}
Moreover, $\epsilon_{\rho}([\gamma m])=(1-\mu_0)\mu_0\langle\rho(\gamma)e_1,e_1\rangle$ since $e_1$ is a $\mu_0$-eigenvector. A similar argument holds for $[\ell\gamma],[\gamma\ell],$ and $\lambda[\gamma]$ since $\rho(\ell)$ commutes with $\rho(m)$. 

Finally, to see that $\epsilon_\rho$ respects (3), let $\gamma_1,\gamma_2\in\pi_K$. Note that $\rho(m)=I+(\mu_0-1)E_{11}$ where $I$ is the identity and $E_{11}$ the matrix with 1 in the top-left entry and 0 elsewhere. As a result $\rho(\gamma_1)\rho(m)\rho(\gamma_2)=\rho(\gamma_1)\rho(\gamma_2)+(\mu_0-1)\rho(\gamma_1)E_{11}\rho(\gamma_2)$, and hence
	\al{
	\epsilon_\rho([\gamma_1\gamma_2]-[\gamma_1 m\gamma_2])
&=(1-\mu_0)\langle\left(\rho(\gamma_1\gamma_2)-\rho(\gamma_1 m\gamma_2)\right)e_1,e_1\rangle\\	&=(1-\mu_0)^2\langle\rho(\gamma_1)E_{11}\rho(\gamma_2)e_1,e_1\rangle\\
&=(1-\mu_0)\langle\rho(\gamma_1)e_1,e_1\rangle(1-\mu_0)\langle\rho(\gamma_2)e_1,e_1\rangle\\
&=\epsilon_\rho([\gamma_1])\epsilon_\rho([\gamma_2]),
	}
the penultimate equality resulting from the image of $E_{11}\rho(\gamma_2)$ being in $\bb C\langle e_1\rangle$.
\end{proof}

\begin{rem} An augmentation $\epsilon:HC_0(K)\to\bb C$ satisfies $\epsilon([\gamma])=\mu_0^{\text{lk}(\gamma,K)}(1-\mu_0)$, where lk$(\gamma,K)$ is the linking number of $\gamma\in\pi_K$ with $K$, if and only if $\epsilon$ is induced from a 1-dimensional KCH representation $\rho:\pi_K\to\bb C$. This is checked by noting that $\rho(\gamma)=\mu_0^{\text{lk}(\gamma,K)}$ induces this augmentation and that this is the only abelian representation with $\rho(m)=\mu_0$.\label{RemKCH1D}
\end{rem}

Previous calculations of $\aug_K(\lambda,\mu)$ have been markedly difficult. The situation is much improved upon considering \augreps of $\pi_K$ as we will see in the succeeding section. There is a \emph{canonical augmentation} with $\epsilon(\mu)=1$ where $\epsilon(\lambda)$ may be any nonzero number \cite[Prop 5.6]{Ng08}. \augreps cannot account for this, since if $\mu_0=1$ then the representation is trivial as $\pi_K$ is normally generated by $m$. 

We recall the relationship found in \cite{Ng08} of $\aug_K(\lambda,\mu)$ to the $A$-polynomial $A_K(\lambda,\mu)$, namely
	
	\begin{equation}
	(\mu^2-1)A_K(\lambda,\mu)\Big\vert\aug_K(\lambda,\mu^2).
	\label{eqnAdividesAug}
	\end{equation}
	
The $A$-polynomial $A_K(\lambda,\mu)$ was introduced in \cite{CCGLS}; it was shown there that $\mu\pm1$ does not divide $A_K(\lambda,\mu)$. However, $\mu-1$ does always divide $\aug_K(\lambda,\mu)$ as seen by the canonical augmentation. 

Let $\rho:\pi_K\to\text{SL}_2\bb C$ denote a representation of $\pi_K$ into SL$_2\bb C$. If $m$ and $\ell$ denote a meridian and longitude of $K$ then $\rho(m)$ and $\rho(\ell)$ can be made simultaneously upper triangular:
	\[\rho(m)=\begin{pmatrix}\mu_1 &\ast\\0&\mu_1^{-1}\end{pmatrix}\qquad\rho(\ell)=\begin{pmatrix}\lambda_1 &\ast\\0&\lambda_1^{-1}\end{pmatrix}\]

The $A$-polynomial has zero locus equal to the top-dimensional component of the closure of points $(\lambda_1,\mu_1)\in(\bb C^*)^2$ such that there is a $\rho$ as above with $\lambda_1, \mu_1$ the upper-left entry of $\rho(\ell),\rho(m)$ respectively.

Given such a representation, let $\rho'(\gamma):=\mu_1^{\text{lk}(\gamma,K)}\rho(\gamma)$, the product of $\rho$ with the abelian representation. If $\mu_1\ne \pm1$ then $\rho(m)$ is diagonalizable and $\rho'$ defines a \augrep with $\mu_0=\mu_1^2$. Thus $A_K(\lambda,\mu)$ divides $\aug_K(\lambda,\mu^2)$.

\subsection{Irreducibility and dimension bounds}
\label{SubsecDimBounds}

It can occur that $\aug_K(\lambda,\mu)$ has more factors than those given in (\ref{eqnAdividesAug}). For example, Ng computes extra factors in $\aug_K(\lambda,\mu)$ for the $(3,4)$-torus knot and three pretzel knots \cite[end of \S 5]{Ng08}.

Similar to the definition of the variety $V_K$, consider the set
\[
U_K^n=\setn{(\epsilon(\lambda),\epsilon(\mu))}{\epsilon\text{ is induced from a degree $n$ KCH representation}}\subset(\bb C^*)^2.\]

From $U^n_K$ we define the $n$-dimensional $A$-polynomial $A_K^n(\lambda,\mu)$ in similar fashion to $\aug_K(\lambda,\mu)$. We would hope to understand $A_K^n(\lambda,\mu)$ by considering only KCH irreps with degree at most $n$, and may do so by the following result.

\begin{lem} Let $\rho:\pi_K\to\text{GL}(V)$ be a \augrep of degree $n$ that is reducible and corresponds to the point $(\lambda_0,\mu_0)$ in $U^n_K$. Then $(\lambda_0,\mu_0)\in U_K^{n'}$ for some $n'<n$. 
\label{KCHIrreps}
\end{lem}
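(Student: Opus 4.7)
The plan is to use the reducibility of $\rho$ to exhibit either a subrepresentation or a quotient representation that is still KCH, has strictly smaller degree, and induces an augmentation at the same point $(\lambda_0,\mu_0)$. The key linear-algebraic observation is that $\rho(m)$ has only two distinct eigenvalues, namely $\mu_0\ne 1$ on the line $\text{span}(e_1)$ and $1$ on $E_1 := \text{span}(e_2,\ldots,e_n)$. Consequently any $\rho(m)$-invariant subspace $W$ decomposes as $W = (W\cap \text{span}(e_1))\oplus (W\cap E_1)$, and because $\text{span}(e_1)$ is one-dimensional, exactly one of $e_1\in W$ or $W\subseteq E_1$ holds.

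Pick a proper nonzero $\rho$-invariant subspace $W\subsetneq V$, and split into the two cases above. If $e_1\in W$, I would take $\rho|_W : \pi_K \to \text{GL}(W)$ as the desired lower-degree representation. Since $\rho(m)|_W$ is diagonalizable with $\mu_0$ of multiplicity $1$ and $1$ of multiplicity $\dim W - 1$, the restriction is a \augrep of degree $\dim W < n$. Because $\ell$ commutes with $m$, the vector $e_1$ remains a $\lambda_0$-eigenvector for $\rho(\ell)|_W$, so Theorem \ref{ThmKCHReps} gives the induced augmentation values $(\mu_0,\lambda_0)$. If instead $W\subseteq E_1$, I would pass to the quotient $\bar\rho : \pi_K \to \text{GL}(V/W)$, which is well-defined since $W$ is $\rho$-invariant. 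Here $\rho(m)$ acts as the identity on $W$, so $\bar\rho(m)$ has $\mu_0$ as an eigenvalue with eigenvector $[e_1]$ and $1$ on the rest; hence $\bar\rho$ is a \augrep of degree $n-\dim W < n$, and $[e_1]$ is a $\lambda_0$-eigenvector of $\bar\rho(\ell)$, so the corresponding augmentation again sits at $(\lambda_0,\mu_0)$.

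Either way we exhibit $(\lambda_0,\mu_0)\in U_K^{n'}$ for some $n'<n$, proving the lemma. I do not anticipate any serious obstacle: the result reduces to the eigenspace dichotomy above together with the compatibility of restriction and quotient with the setup in Theorem \ref{ThmKCHReps}. The only point that warrants care is verifying in the second case that $\bar\rho$ genuinely inherits the KCH property — that is, that $\bar\rho(m)$ is diagonalizable with the prescribed eigenvalue multiplicities — but this is immediate from $W\subseteq E_1$ since the decomposition of $V$ into $\rho(m)$-eigenspaces descends cleanly to $V/W$.
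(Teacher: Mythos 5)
Your proof is correct and follows essentially the same route as the paper: split on whether the $\mu_0$-eigenvector $e_1$ lies in the invariant subspace, restricting to it when it does and passing to the quotient when it does not, checking in each case that the result is still a KCH representation with the same eigenvalue pair $(\lambda_0,\mu_0)$. The only cosmetic difference is that you justify the quotient case via the eigenspace decomposition $W=(W\cap\operatorname{span}(e_1))\oplus(W\cap E_1)$, whereas the paper exhibits an explicit eigenbasis of $V/V'$ by subtracting off $e_1$-components.
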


\begin{proof}Let $V'\subset V$ be a proper invariant subspace. If $e_1$ is not contained in $V'$ then take the quotient representation $\overline\rho:\pi_K\to\text{GL}(V/V')$, which is well-defined as $V'$ is invariant. Consider a basis $\{e_1+V',f_2+V',\ldots,f_{n'}+V'\}$ of $V/V'$. Suppose that $f_i=\sum c^i_je_j$ as a vector in $V$. Then for $f_i'=f_i-c^i_1e_1$, the set $\{e_1+V',f_2'+V',\ldots,f_{n'}'+V'\}$ is also a basis of $V/V'$. As $\overline\rho(m)(f_i'+V')=f_i'+V'$ for $2\le i\le n'$, this basis shows $\overline\rho$ to be a KCH representation.

Since $e_1+V'$ is in the eigenvector basis for $\overline\rho(m)$ we get $\lambda_0,\mu_0$ in the induced augmentation.

If $e_1$ is contained in $V'$ define a new representation on $V'$ by restriction. Then $\rho(m)$ is diagonalizable in $V'$ so this is a KCH representation. Clearly the eigenvalues $\lambda_0,\mu_0$ are unchanged.
\end{proof}

We now bound the degree of a KCH irrep for a fixed knot $K$.

\vspace*{6pt}
\noindent{\bf Theorem \ref{ThmDimBound}.}\ \ {\it Let $\set{g_1,\ldots,g_r}$ be a meridional generating set for $\pi_K$. If $\rho:\pi_K\to\text{GL}_n\bb C$ is a KCH irrep of $\pi_K$ then $n\le r$.}
\vspace*{6pt}

\begin{proof}
Let $m$ be the meridian of $K$ sent by $\rho$ to $M:=\text{diag}[\mu_0,1,\ldots,1]$ (in the basis $\{e_1,e_2,\ldots,e_n\}$). For each $g_i$, let $w_i\in\pi_K$ be such that $g_i=w_imw_i^{-1}$. Write $W_{i}$ for the matrix representation of $\rho(w_{i})$. Now the specific form of $M$ guarantees that for each $1\le i\le r$, \[W_{i}MW_{i}^{-1}=I+(\mu_0-1)W_{i}E_{11}W_{i}^{-1}.\]
Now $(\mu_0-1)W_{i}E_{11}W_{i}^{-1}$ is a rank 1 matrix; let $v_{i}$ be a vector whose span equals the image. For each $1\le j\le r$ there is a scalar $\alpha_i^j\in\bb C$ for every $1\le i\le r$ such that	
\[\rho(g_i)v_j=v_j+\alpha_i^jv_i.\]
This shows that the span of $\set{v_1,v_2,\ldots,v_r}$ is an invariant subspace. We conclude $n\le r$ since $\rho$ is irreducible.
\end{proof}

\begin{cor} The sequence of polynomials $A_K^n(\lambda,\mu)$, $n\ge1$ stabilizes.
\label{CorStableA}
\end{cor}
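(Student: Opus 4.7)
The plan is to show that the sequence of sets $U_K^n\subset(\bb C^*)^2$ stabilizes for $n$ large, from which stabilization of the polynomials $A_K^n(\lambda,\mu)$ follows immediately (taking closures and then the reduced defining polynomial of the top-dimensional component).

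First I would establish an easy monotonicity property: $U_K^{n'}\subseteq U_K^n$ whenever $n'\le n$. Given a KCH representation $\rho':\pi_K\to\text{GL}_{n'}\bb C$ inducing the point $(\lambda_0,\mu_0)$, form the direct sum $\rho=\rho'\oplus\mathbf{1}_{n-n'}$ with the trivial representation on an $(n-n')$-dimensional complement. Then $\rho(m)$ is diagonalizable with eigenvalue $\mu_0$ of multiplicity $1$ and eigenvalue $1$ of multiplicity $(n'-1)+(n-n')=n-1$, so $\rho$ is a KCH representation of degree $n$. The vector $e_1$ (and hence the eigenvalue $\lambda_0$ of $\rho(\ell)$) is unchanged, so the induced augmentation is the same point.

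Next I would combine this with the two key tools already proved. Let $r=\mr(K)$ and fix a meridional generating set of size $r$. By Theorem \ref{ThmDimBound}, every KCH irrep of $\pi_K$ has degree at most $r$. Given any point $(\lambda_0,\mu_0)\in U_K^n$ coming from a reducible KCH representation, Lemma \ref{KCHIrreps} produces a strictly lower-degree KCH representation inducing the same point; iterating the descent terminates at an irreducible KCH representation, which by Theorem \ref{ThmDimBound} has degree at most $r$. Hence for every $n\ge 1$,
\[U_K^n\ \subseteq\ \bigcup_{k=1}^{\min(n,r)}U_K^k.\]
Combined with the monotonicity established above, this gives $U_K^n=U_K^r$ for all $n\ge r$.

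Taking closures in $(\bb C^*)^2$ and extracting the reduced polynomial of the maximum-dimensional component, we conclude $A_K^n(\lambda,\mu)=A_K^r(\lambda,\mu)$ for every $n\ge r$, so the sequence stabilizes. There is no real obstacle here; the only thing to check carefully is that attaching trivial summands preserves the KCH condition (which is routine since diag$[\mu_0,1,\ldots,1]\oplus I=$ diag$[\mu_0,1,\ldots,1]$ of the enlarged size), and that the descent from Lemma \ref{KCHIrreps} terminates (it does, since the degree strictly decreases).
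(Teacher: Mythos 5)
Your proof is correct and takes essentially the same route as the paper, whose entire argument is ``combine Lemma \ref{KCHIrreps} with Theorem \ref{ThmDimBound}'' to cap the degree of the relevant KCH irreps by $\mr(K)$. Your extra step establishing monotonicity $U_K^{n'}\subseteq U_K^{n}$ via adding trivial summands is a worthwhile detail that the paper leaves implicit, and it is argued correctly.
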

\begin{proof} Combine the result of Lemma \ref{KCHIrreps} with Theorem \ref{ThmDimBound}.
\end{proof}

\section{The augmentation polynomial of 2-bridge knots}
\label{Sec2bridge}

Here we consider knots with a 2-bridge presentation, proving Theorem \ref{Thm2bridge}. Prior to this Lemma \ref{Lem2bridge} collects some identities on augmentations to be used in the proof. Studies of 2-bridge knots have been carried out in many settings. Of particular interest to our discussion are the papers \cite{Riley} and \cite{Sch}. 

Throughout Section \ref{Sec2bridge} and \ref{SecTorusknots} we derive and make use of a number of identities on the values of augmentations $\epsilon:HC_0(K)\to\bb C$. In an attempt to avoid a cluttering of $\epsilon$'s and demarcation symbols, we will adopt the notation
	\[X \ee Y \]
to denote that elements $X,Y$ of the cord algebra satisfy $\epsilon(X)=\epsilon(Y)$. At times $Y$ may instead be a complex number, in which case we mean that $\epsilon(X)=Y$. In addition, we drop the subscript of $\mu_0$ and simply write $\mu$ for both the element of $R_0$ and the eigenvalue. As in Section \ref{SecAugReps} the chosen meridian in the isomorphism of Theorem \ref{ThmPi1Cords} is denoted by $m$.

\begin{lem}
Given a knot $K$, consider any meridian $b$ and let $\epsilon:HC_0(K)\to \bb C$ be an augmentation of $K$ with $\epsilon(\mu)\ne1$. Then \begin{itemize}
			\item[(i)] $[b^{-1}] = \mu^{-1}([e]+[m]-[b])$ in $HC_0(K)$;
			\item[(ii)] for any $n\in\bb Z$, 
				\[\frac{[b^{n-1}][b]+([b]-[e])([m^{n-1}]-[b^{n-1}])}{1-\mu} \ee [b^n] \ee \frac{[b^{n+1}][b^{-1}]-\mu^{-1}([b]-[e])([m^{n+1}]-[b^{n+1}])}{1-\mu}.\]
	 \end{itemize}
\label{Lem2bridge}
\end{lem}

\begin{proof}
By Theorem \ref{ThmPi1Cords}, the identities $[e] = 1-\mu$ and also 
	\begin{equation}[g][h] = [gh]-[gmh]\qquad\text{and}\qquad	[m^{\pm1}g] = \mu^{\pm1}[g] = [gm^{\pm1}]
	\label{epsRel}
	\end{equation} 
hold in $HC_0(K)$, for any $g,h\in \pi_K$. We will often use the following reformulation of the equation on the left in (\ref{epsRel}): for $\delta=\pm1$ and any $g,h\in \pi_K$,
		\begin{equation} [gm^{\delta}h] = [gh] -\delta\mu^{\frac12(\delta-1)}[g][h].
		\label{SignIndptReln}
		\end{equation}
As $b$ is a meridian, choose $w\in\pi_K$ so that $b = wmw^{-1}$. Then (i) is derived by
			\al{
				[b]+\mu[b^{-1}]	&=[wmw^{-1}]+\mu[wm^{-1}w^{-1}]\\	&= [e]-[w][w^{-1}]+\mu([e]+\mu^{-1}[w][w^{-1}])\\
			&= [e]+[m].
			}
To check (ii), apply (\ref{SignIndptReln}) with $\delta=1$ and use that $\epsilon(1-\mu)\ne0$ to get
			\al{
				[b^n]	&= [wm^{n-1}w^{-1}]-[wm^{n-1}][w^{-1}]\\	
			&= [b^{n-1}]-\mu^{n-1}([e]-[b])\\	&\ee\frac1{1-\mu}\left([e][b^{n-1}]+[b][m^{n-1}]-[e][m^{n-1}]\right)\\
			&=\frac{[b^{n-1}][b]}{1-\mu}+\frac{([b]-[e])([m^{n-1}]-[b^{n-1}])}{1-\mu}.
			}
A similar calculation using (\ref{SignIndptReln}) with $\delta=-1$ gives
			\al{
				[b^n]	%&\ee [wm^{n+1}w^{-1}]+\mu^{-1}[wm^{n+1}][w^{-1}] \\
						%&\ee [b^{n+1}] + \mu^{n+1}([b^{-1}]-[e]) \\ %&\ee \frac1{1-\mu}\left([e][b^{n+1}]+[b^{-1}][m^{n+1}]-[e][m^{n+1}]\right) \\
						&\ee \frac{[b^{-1}][b^{n+1}]}{1-\mu}-\mu^{-1}\frac{([b]-[e])([m^{n+1}]-[b^{n+1}])}{1-\mu}.
			}
\end{proof}

We recall Schubert's normal form for a 2-bridge knot $K$ \cite{Sch} (see also \cite[Chp. 12]{BZKnots}). There are integers $p,q$ that determine $K$, where $-p<q<p$ and $p,q$ are coprime and odd when $K$ is a knot. Let $p-1=2k$. The group $\pi_K$ has presentation
		\[\pi_K \cong \gen{m,b\ \vert\ wm=bw},\]
where $w=m^{\ve_1}b^{\ve_2}\ldots m^{\ve_{2k-1}}b^{\ve_{2k}}$ and $\ve_i=(-1)^{\lfloor iq/p\rfloor}$.

\vspace*{6pt}
\noindent{\bf Theorem \ref{Thm2bridge}.}\ \ {\it Let $K$ be a 2-bridge knot and $\epsilon:HC_0(K)\to\bb C$ an augmentation with $\epsilon(\mu)\ne1$. Then $\epsilon$ is induced from a \augrep $\rho_{\epsilon}:\pi_K\to\text{GL}_2\bb C$. Thus, 
	\[\aug_K(\lambda,\mu^2)=(\mu^2-1)A_K(\lambda,\mu).\]
}
\vspace*{6pt}

\begin{proof} If $\epsilon([b])=\epsilon([m])$ then, as $b$ is conjugate to $m$, (\ref{epsRel}) implies $\epsilon([g])=\mu^{\text{lk}(g,K)}(1-\mu)$ for any $g\in\pi_K$. By Remark \ref{RemKCH1D}, $\epsilon$ must be induced from a degree 1 representation with $\rho(m)=\mu$. Summing with the trivial representation gives the result. We now assume $\epsilon([b])\ne\epsilon([m])$.

From the presentation of $\pi_K$ above and motivated by Remark \ref{Rem2genForm} and Lemma \ref{Lem2bridge} (i), define $\rho_{\epsilon}:\pi_K\to\text{GL}_2\bb C$ on the generators $m, b$ by
	\[\rho_{\epsilon}(m)=M=\begin{pmatrix}\mu &0\\ 0 & 1\end{pmatrix} \quad \text{and} \quad \rho_{\epsilon}(b)=B=\begin{pmatrix}\frac{\epsilon([b])}{1-\mu} & \frac{\epsilon([b]-[e])\epsilon([m]-[b])}{(1-\mu)^2}\\ 1 &\frac{\epsilon([e]+[m]-[b])}{1-\mu}\end{pmatrix}.\]

We proceed first to show that $\rho_{\epsilon}$ determines a well-defined representation, and second that it does induce the given augmentation $\epsilon$. The discussion in Section \ref{SubsecAugKCHReps} then implies that $\aug_K(\lambda,\mu^2)=(\mu^2-1)A_K(\lambda,\mu)$.

{\bf $\rho_\epsilon$ is well-defined:} Following Lemma 1 of \cite{Riley}, we find $U\in\text{SL}_2\bb C$ such that \begin{equation}UMU^{-1}=\begin{pmatrix}\mu&1\\0&1\end{pmatrix} \quad\text{and}\quad UBU^{-1}=\begin{pmatrix}\mu&0\\-\mu u&1\end{pmatrix}
	\label{newMB}
	\end{equation}
for some $u$ in the field $\bb Q(\mu,\epsilon([b]))$. Indeed, choose a square root $z$ of $\epsilon([m]-[b])\in\bb C$ and define 
	\[U=\begin{pmatrix}\frac{1}{z}&\frac{z}{1-\mu}\\ 0 & z\end{pmatrix}.\]
Then $UMU^{-1}$ and $UBU^{-1}$ have the desired form, where $-\mu u=\epsilon([m]-[b])$.

Recall the element $w=m^{\ve_1}b^{\ve_2}\ldots m^{\ve_{2k-1}}b^{\ve_{2k}}$ appearing in our presentation of $\pi_K$. We write $N=UMU^{-1}$ and $C=UBU^{-1}$ for the matrices in (\ref{newMB}) and write $W=N^{\ve_1}C^{\ve_2}\ldots N^{\ve_{2k-1}}C^{\ve_{2k}}$. It is shown in \cite{Riley} that
	\begin{equation}W_{11}+(1-\mu)W_{12}=0
	\label{nabPoly}
	\end{equation}
if and only if $m\mapsto N$, $b\mapsto C$ determines a well-defined representation of the knot group $\pi_K$. This is equivalent to $\rho_\epsilon$ being well-defined. 

To demonstrate (\ref{nabPoly}) we use the following. 

\begin{lem} Suppose $W=N^{\ve_1}C^{\ve_2}\ldots N^{\ve_{r-1}}C^{\ve_r}$ corresponds to a word $w$ in $\{m^{\pm1},b^{\pm1}\}$ under the assignment $m\mapsto N, b\mapsto C$, where $\ve_i=\pm1$. Then for all $n\in\ints$
	\begin{equation}
	[b^nw]\ee[b^n]W_{11}-W_{21}.
	\label{epsAndMatrix}
	\end{equation}
\label{LemepsAndW}
\end{lem}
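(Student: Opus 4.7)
My plan is to prove the lemma by induction on the length of the word $w$ in $\{m^{\pm 1}, b^{\pm 1}\}$, strengthening the inductive hypothesis by simultaneously tracking an auxiliary identity about the second column of $W$. Concretely, I would set
\[f(w) := [b^n]W_{11} - W_{21} \quad \text{and} \quad g(w) := \mu u \bigl([b^n]W_{12} - W_{22}\bigr),\]
and prove jointly that $[b^n w] \ee f(w)$ (the lemma) and $\mu[b^n w] - [b^n w b] \ee g(w)$ (the companion identity).

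Before launching the induction, I would establish a ``Cayley--Hamilton'' identity in the augmented cord algebra, namely
\[[\gamma b^2] - (1+\mu)[\gamma b] + \mu[\gamma] \ee 0 \qquad \text{for every } \gamma \in \pi_K.\]
This follows from two applications of relation (3) of Theorem \ref{ThmPi1Cords} to $[\gamma wm^k w^{-1}]$, using $b = wmw^{-1}$ together with $[\gamma w][w^{-1}] \ee [\gamma] - [\gamma b]$, which yield the recursion $[\gamma b^k] \ee [\gamma b^{k-1}] + \mu^{k-1}([\gamma b] - [\gamma])$; the case $k=2$ is the desired identity. Iterating Cayley--Hamilton with $\gamma = b^{n-1}$ then shows that $[b^{n+1}] - \mu[b^n]$ is independent of $n \in \ints$, hence equal to $[b] - \mu[e] = [b] - [m] \ee \mu u$; this handles the base case $w = e$, since $f(e) = [b^n]$ is trivial and $g(e) = -\mu u$.

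For the inductive step, write $w = w'y$ with $y \in \{m^{\pm 1}, b^{\pm 1}\}$ and compute $W = W'Y$. If $y = m^{\pm 1}$, the first column of $N^{\pm 1}$ is a scalar multiple of $(1,0)^{T}$, so $f(w) = \mu^{\pm 1} f(w')$, matching $[b^n w'm^{\pm 1}] \ee \mu^{\pm 1}[b^n w']$ from (\ref{epsRel}); the companion identity then reduces, via $\mu u \ee [b] - [m]$, to relation (3) of Theorem \ref{ThmPi1Cords} with $\gamma_1 = b^n w'$ and $\gamma_2 = b$. If $y = b^{\pm 1}$, the second column of $C^{\pm 1}$ is $(0,1)^{T}$, so $g(w) = g(w')$, while the first column of $C^{\pm 1}$ mixes both columns of $W'$; combining the inductive hypotheses for $f(w')$ and $g(w')$ then yields $f(w) \ee [b^n w'b^{\pm 1}]$, and the companion identity becomes precisely the Cayley--Hamilton identity applied to $\gamma = b^n w$ (or its $b^{-1}$-rearrangement).

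The main obstacle I anticipate is recognizing that the statement of the lemma alone is too weak to close the induction: the case $y = b$ requires information about the second column of $W'$, which is encoded in the companion identity but invisible in the lemma itself. Once the right companion is identified, the inductive step is routine matrix algebra combined with relations (2) and (3); the only genuinely substantive algebraic input is the Cayley--Hamilton identity, which encodes the fact that $b$ is a conjugate of $m$ in $\pi_K$.
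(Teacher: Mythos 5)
Your proposal is correct, but it runs the induction in the opposite direction from the paper and is organized around a different key identity. The paper peels two letters $N^{\ve_1}C^{\ve_2}$ off the \emph{left} of $W$: since left multiplication makes the first column of $W$ depend only on the first column of $W'$, the statement as written suffices, but the cord-algebra side then produces $[b^{n+\ve_2}w']$ and $[b^{\ve_2}w']$, so the paper crucially needs the ``for all $n$'' quantifier in the induction hypothesis, and the step reduces to the identity (\ref{2BrEqn}), verified in four sign cases using $[b^{n+1}]\ee\mu[b^n]-([m]-[b])$. You instead peel one letter off the \emph{right}, which leaves the prefix $b^n$ untouched but mixes the columns of $W'$; you compensate by strengthening the hypothesis with the companion identity $\mu[b^nw]-[b^nwb]\ee\mu u([b^n]W_{12}-W_{22})$, and the whole step then rests on the single quadratic relation $[\gamma b^2]-(1+\mu)[\gamma b]+\mu[\gamma]\ee0$, which you derive correctly from relations (2) and (3) and $b=wmw^{-1}$; note this is exactly the paper's (\ref{Impteqn}) with an arbitrary prefix $\gamma$ in place of $b^n$, and its $\gamma=b^{n-1}$ specialization gives your base case. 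I checked the four cases of your step against the explicit matrices $N^{\pm1},C^{\pm1}$ and they close as claimed (with the small caveat that for $y=b^{-1}$ the $f$-step itself, not only the companion, invokes the Cayley--Hamilton identity, consistent with your parenthetical). The trade-off: your route buys a cleaner conceptual structure (two bracket quantities tracking the two columns, one algebraic input) at the price of a strengthened induction hypothesis, whereas the paper keeps the bare statement, exploits the $n$-quantifier, and pays with a more computational four-case verification.
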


We prove Lemma \ref{LemepsAndW} after the proof of Theorem \ref{Thm2bridge}. For any set $\set{\ve_1,\ve_2,\ldots,\ve_r}\subset\set{\pm1}^r$, call $N^{\ve_1}C^{\ve_2}\ldots N^{\ve_{r-1}}C^{\ve_r}$ a \emph{palindrome} if $\ve_i=\ve_{r+1-i}$ for $1\le i\le r$. Since $q$ is odd the parity of $\lfloor i\frac qp\rfloor$ is the same as that of $\lfloor q-i\frac qp\rfloor$ for $1\le i<p$, so $W$ is a palindrome. It is shown in \cite{Riley} that $-\mu uW_{12}=W_{21}$ if $W$ is a palindrome, which in our notation implies $([m]-[b])W_{12}\ee W_{21}$.

By using $[wm]\ee\mu[w]$ and applying the result of Lemma \ref{LemepsAndW}, with $n=1$ and $n=0$, we have
	\al{
[bw]-[mw]	&\ee [b] W_{11}-W_{21}-\mu\left([e] W_{11}-W_{21}\right)\\		&=\left([b]-\mu[e]\right)W_{11}-(1-\mu)W_{21}\\
&\ee([b]-[m])\left(W_{11}+(1-\mu)W_{12}\right).
	}

Since $bw=wm$ in $\pi_K$ and $\epsilon$ is well-defined, $\epsilon([bw])-\epsilon([wm])=0$. As $\epsilon([b])\ne\epsilon([m])$, we conclude that (\ref{nabPoly}) holds, and so $\rho_\epsilon$ is well-defined.

{\bf $\epsilon$ is induced by $\rho_{\epsilon}$:} It suffices to show that for any word $x$ in $\{m^{\pm1},b^{\pm1}\}$, if $X$ is the corresponding product of matrices $M^{\pm1}$ and $B^{\pm1}$, then
\begin{equation}
X_{11}=\langle Xe_1,e_1\rangle \ee \frac{[x]}{1-\mu}.
\label{EqnXinduction}
\end{equation} 

That (\ref{EqnXinduction}) holds for $x=m^{\pm1}$ follows immediately from $[m]\ee\mu(1-\mu)$. We claim that for $n\in\bb Z$, the entries of $B^n$ satisfy
	\al{
		(1-\mu)(B^n)_{11} 	&\ee [b^n]; \\
		(1-\mu)^2(B^n)_{12} &\ee ([b]-[e])([m^n]-[b^n])
	}
and so (\ref{EqnXinduction}) holds for $x=b^n$ for any $n\in\bb Z$.

\begin{proof}[Proof of claim] The case $n=1$ is apparent by definition and $n=-1$ follows from Lemma \ref{Lem2bridge} (i) and the fact that $\det B = \mu\ $ (note that $-\mu^{-1}([m]-[b]) = [m^{-1}]-[b^{-1}]$). 

Suppose both equalities hold for $n=k>0$. Then considering the product $B^kB$,
		\[(B^{k+1})_{11} \ee \frac{[b^k][b]}{(1-\mu)^2} + \frac{([b]-[e])([m^k]-[b^k])}{(1-\mu)^2} \ee \frac{[b^{k+1}]}{1-\mu},\]
the last equality from Lemma \ref{Lem2bridge} (ii). A similar calculation and another application of Lemma \ref{Lem2bridge} (ii) shows that $(B^{k+1})_{12} \ee \frac{([b]-[e])([m^{k+1}]-[b^{k+1}])}{(1-\mu)^2}$.

In similar fashion, if the claim holds for $n=k<0$ then taking the product $B^kB^{-1}$ and using Lemma \ref{Lem2bridge} (ii) we see it must hold for $n=k-1$.
\end{proof}

We can proceed inductively. As (\ref{EqnXinduction}) holds for $x=b^n$ we may suppose $m^{\pm1}$ appears in $x$. Assume (\ref{EqnXinduction}) holds for words of shorter length than $x$. 

Consider $m^{\delta}$, $\delta=\pm1$, and $x',x''$ such that $x=x'm^{\delta}x''$ as words in $\{m^{\pm1},b^{\pm1}\}$. Letting $E_{11}$ be the 2$\times$2 matrix with 1 in the top-left entry and 0 elsewhere, $M^{\delta}=I+(\mu^{\delta}-1)E_{11}$. Hence
\[X=X'M^{\delta}X''=X'X''+(\mu^{\delta}-1)X'E_{11}X'',\]
the $(1,1)$-entry of which is $(X'X'')_{11}+(\mu^{\delta}-1)(X')_{11}(X'')_{11}\ee\frac{[x'x'']}{1-\mu}-\delta\mu^{\frac12(\delta-1)}\frac{[x'][x'']}{1-\mu}$, by induction. The value of this expression and of $\frac{[x]}{1-\mu}$ under $\epsilon$ are equal by (\ref{SignIndptReln}). Hence $\rho_\epsilon$ induces $\epsilon$, completing the proof of Theorem \ref{Thm2bridge}.
\end{proof}

\begin{proof}[Proof of Lemma \ref{LemepsAndW}]
	The statement is trivially true when $w=1$ and $W=I$. Now let $W=N^{\ve_1}C^{\ve_2}W'$ and suppose that (\ref{epsAndMatrix}) holds for $W'$. Define $\delta_i=\frac12(\ve_i-1)$, for $1\le i\le r$. Then
	\al{
	[b^nw]	&\ee[b^{n+\ve_2}w']-\ve_1\mu^{\delta_1}[b^n][b^{\ve_2}w']\\
&\ee[b^{n+\ve_2}]W'_{11}-W'_{21}-\ve_1\mu^{\delta_1}[b^n]([b^{\ve_2}]W'_{11}-W'_{21})\\
&=\left([b^{n+\ve_2}]-\ve_1\mu^{\delta_1}[b^n][b^{\ve_2}]\right)W'_{11}-\left(1-\ve_1\mu^{\delta_1}[b^n]\right)W'_{21}.
}
	Now, we have that $N^{\ve_1}C^{\ve_2}=\begin{pmatrix}\mu^{\ve_1+\ve_2}+\ve_1\ve_2\mu^{\delta_1+\delta_2}\epsilon([m]-[b])&\ve_1\mu^{\delta_1}\\ \ve_2\mu^{\delta_2}\epsilon([m]-[b]) & 1\end{pmatrix}$, and so
	\[W_{11}\ee\left(\mu^{\ve_1+\ve_2}+\ve_1\ve_2\mu^{\delta_1+\delta_2}([m]-[b])\right)W'_{11}+\ve_1\mu^{\delta_1}W'_{21}\]
	and
\[W_{21}\ee\left(\ve_2\mu^{\delta_2}([m]-[b])\right)W'_{11}+W'_{21}.\]
Thus, to conclude the proof of the lemma, we need to show that
\begin{equation}
[b^n]\left(\mu^{\ve_1+\ve_2}+\ve_1\ve_2\mu^{\delta_1+\delta_2}([m]-[b])\right)-\ve_2\mu^{\delta_2}([m]-[b]) \ee[b^{n+\ve_2}]-\ve_1\mu^{\delta_1}[b^n][b^{\ve_2}].
\label{2BrEqn}
\end{equation}

Rearranging the expression in Lemma \ref{Lem2bridge} (ii), we know $[b^{n+1}]\ee \mu[b^n] + (1-\mu)[b^n] + \mu^n[b]-\mu^n[e]$. We can then derive that \begin{equation}
[b^{n+1}]\ee\mu[b^n]-([m]-[b])
\label{Impteqn}
\end{equation}

for all $n\ge0$ (applying strong induction to express $(1-\mu)[b^n]$ in terms of $[b]$ and $\mu$ produces a telescoping sum). We show that (\ref{2BrEqn}) holds in four cases.

{\bf Case $\ve_1=1, \ve_2=1$:} In this case $\delta_1=\delta_2=0$, so the left side of (\ref{2BrEqn}) is $[b^n]\left(\mu^2+[m]-[b]\right)-([m]-[b])$ which equals $\mu[b^n]-[b^n][b]-([m]-[b])$. Using (\ref{Impteqn}), we see that this equals the right side of (\ref{2BrEqn}).

{\bf Case $\ve_1=-1, \ve_2=1$:} Here, $\delta_1=-1$ and so the left side is $[b^n]\left(\mu+\mu^{-1}[b]\right)-([m]-[b])$ which equals $[b^{n+1}]+\mu^{-1}[b^n][b]$ by another application of (\ref{Impteqn}).

{\bf Case $\ve_1=1, \ve_2=-1$:}\\
In this case the left side is $[b^n]\left(\mu+\mu^{-1}[b]\right)+\mu^{-1}([m]-[b])$. On the right side we have 
{\small
\[[b^{n-1}]-[b^n][b^{-1}]\ee[b^{n-1}]-\mu^{-1}[b^n](1-\mu^2-[b])=\mu[b^n]+\mu^{-1}[b^n][b]+[b^{n-1}]-\mu^{-1}[b^n].\]
}
This provides us the result since $[b^{n-1}]-\mu^{-1}[b^n]\ee\mu^{-1}([m]-[b])$ by (\ref{Impteqn}).

{\bf Case $\ve_1=-1, \ve_2=-1$:} Finally, the left side in this case is \[[b^n]\left(\mu^{-2}+\mu^{-2}([m]-[b])\right)+\mu^{-1}([m]-[b])\ee[b^n]\left(\mu^{-1}+\mu^{-1}[b^{-1}]\right)+\mu^{-1}([m]-[b]).\]
The right side is $[b^{n-1}]+\mu^{-1}[b^n][b^{-1}]$. But (\ref{Impteqn}) says $\mu^{-1}[b^n]+\mu^{-1}([m]-[b])\ee[b^{n-1}]$, showing that equality (\ref{2BrEqn}) holds. This proves Lemma \ref{LemepsAndW}.
\end{proof}

\section{Augmentations and \augreps in higher dimensions}
\label{SecTorusknots}

We advance to knots for which $\wt{A}_K(\lambda,\mu)\ne A_K^2(\lambda,\mu)$, in which case $\aug_K(\lambda,\mu)$ has more factors than those in the classical $A$-polynomial. We tackle $(p,q)$-torus knots first, showing their knot group admits KCH irreps up to degree $\min(p,q)$. We then consider a family of 3-bridge pretzel knots, and find KCH irreps with degree three.

\subsection{Torus knots}
\label{subsecTorusKnots}

Let $T(p,q)$ be the torus knot for a coprime pair $(p,q)$ of positive integers. Here we prove Theorem \ref{Thmtorusknot}. As mentioned in the introduction, from this result and Theorem \ref{ThmDimBound} we get a new proof that meridional rank equals bridge number for torus knots (cf. \cite{RZ}). 

Let $\pi$ be the knot group of $T(p,q)$. We will work with the familiar 2-generator presentation, $\pi\cong\langle x,y \mid x^p=y^q\rangle$. Here the peripheral elements $m$ and $\ell$, representing a meridian and 0-framed longitude of $T(p,q)$ respectively, satisfy $\ell=x^pm^{-pq}$ and $m=x^sy^r$ where $r,s$ are integers such that $rp+sq=1$. Without any loss of generality we assume $\min(p,q)=p\ge 1$.

We need the following lemmas to prove Theorem \ref{Thmtorusknot}

\begin{lem}
Let $\wt{Y}$ be an $n\times n$ complex matrix of the form
		\[\wt{Y}=
		\begin{pmatrix}x_1 &x_2 & \ldots & x_{n-1} & y_0\\
		  1 & 0 & \ldots & 0 & y_1\\
		  0	& 1 & 		 & 0 & y_2\\
		  \vdots&  &\ddots  &\vdots&\vdots\\
		  0	& 	&0	&1&y_{n-1}
		\end{pmatrix}.\]
 Denote by $a(t)=a_0+a_1t+\cdots+a_{n-1}t^{n-1}+t^n$ the characteristic polynomial $\det(tI-\wt{Y})$. Then for all $1\le i\le n$,
			\[a_{n-i}=-x_i-y_{n-i}+\sum_{j=1}^{i-1}x_jy_{n-i+j}\]
		where we let $x_n=0$.
\label{LemCharPoly}
\end{lem}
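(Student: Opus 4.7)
My plan is to compute $a(t) = \det(tI - \wt{Y})$ by exploiting the fact that the parameters $y_0, \ldots, y_{n-1}$ appear only in the last column. I will write that column as $t\,e_n - y$ with $y = (y_0, \ldots, y_{n-1})^T$ and $e_n$ the last standard basis vector; multilinearity of the determinant in the last column then gives
\[a(t) = \det(M_{te_n}) - \det(M_y),\]
where $M_v$ denotes $tI - \wt{Y}$ with its last column replaced by $v$.

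For the first piece, $M_{te_n}$ is block upper triangular with bottom-right entry $t$ and upper-left $(n-1)\times(n-1)$ block in companion form with first row $(t-x_1, -x_2, \ldots, -x_{n-1})$ and standard $-1$ subdiagonal. A cofactor expansion along the first column (together with a short induction) identifies that block's determinant as $p(t) := t^{n-1} - x_1 t^{n-2} - \cdots - x_{n-1}$, giving $\det(M_{te_n}) = t\,p(t)$.

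For the second piece I expand along the last column: $\det(M_y) = \sum_{k=1}^{n}(-1)^{k+n}\, y_{k-1}\,M_{k,n}$, where $M_{k,n}$ is the $(k,n)$-minor of $tI - \wt{Y}$. The key observation will be that each $M_{k,n}$ is itself block upper triangular, because rows $k+1, \ldots, n$ of $tI - \wt{Y}$ have their nonzero entries (outside the deleted last column) only in columns $k, k+1, \ldots, n-1$. The upper-left $(k-1)\times(k-1)$ block is a smaller companion-type matrix with determinant $p_{k-1}(t) := t^{k-1} - x_1 t^{k-2} - \cdots - x_{k-1}$ (taking $p_0 \equiv 1$), while the lower-right $(n-k)\times(n-k)$ block is upper triangular with $-1$'s along the diagonal, contributing $(-1)^{n-k}$. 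Hence $M_{k,n} = (-1)^{n-k}p_{k-1}(t)$, and the signs cancel in the cofactor sum to yield $\det(M_y) = \sum_{j=0}^{n-1} y_j\, p_j(t)$.

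Combining, $a(t) = t\,p(t) - \sum_{j=0}^{n-1} y_j\, p_j(t)$. The coefficient of $t^{n-i}$ in $t\,p(t)$ is $-x_i$ (using the convention $x_n = 0$). In $p_j(t) = t^j - \sum_{\ell=1}^{j} x_\ell t^{j-\ell}$, the coefficient of $t^{n-i}$ equals $1$ when $j = n-i$ and $-x_{j-(n-i)}$ when $n-i+1 \le j \le n-1$. Summing these contributions, then relabeling by $j = n-i+k$, recovers exactly $a_{n-i} = -x_i - y_{n-i} + \sum_{j=1}^{i-1} x_j y_{n-i+j}$. The main obstacle is the careful bookkeeping needed to verify the block-triangular structure of the minors $M_{k,n}$ and to track the cofactor signs; once that is in place, the rest is mechanical polynomial arithmetic.
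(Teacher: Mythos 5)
Your proof is correct, and it is complete where the paper simply declares the computation ``routine'' and leaves it to the reader (the argument the author had in mind runs through the Leibniz expansion of $\det(tI-\wt Y)$ over permutations, checking which $\sigma\in S_n$ give nonzero products and tracking their signs). Your route is different and arguably cleaner: splitting the last column as $te_n-y$ and using multilinearity reduces everything to two structured determinants, namely $t\,p(t)$ with $p(t)=t^{n-1}-x_1t^{n-2}-\cdots-x_{n-1}$ the companion-matrix characteristic polynomial of the upper-left block, and $\sum_{j=0}^{n-1}y_j\,p_j(t)$ coming from the cofactor expansion of $M_y$ along the last column. The key points you need — that each minor $M_{k,n}$ is block triangular with a $(k-1)\times(k-1)$ companion block of determinant $p_{k-1}(t)$ and an upper-triangular $(n-k)\times(n-k)$ block of determinant $(-1)^{n-k}$, so the cofactor signs cancel exactly — are stated accurately and are easy to verify from the sparsity pattern of rows $k+1,\dots,n$. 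The final coefficient extraction, giving $a_{n-i}=-x_i-y_{n-i}+\sum_{j=1}^{i-1}x_jy_{n-i+j}$ with the convention $x_n=0$, is carried out correctly. What your approach buys is a structural argument with no sign bookkeeping over permutations; what the Leibniz-expansion approach buys is a single formula with no auxiliary matrices, at the cost of a case analysis on which permutations contribute. Either is acceptable; yours would serve perfectly well as the omitted proof.
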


The proof of Lemma \ref{LemCharPoly} is a routine calculation that we leave to the reader.

\begin{lem} Given $\mu_0\in\bb C^*$ and $1\le n\le p$, let $z$ be any $n^{th}$ root of $\mu_0^{pq}$ if $n<p$ and set $z=(-1)^{n-1}\mu_0^q$ if $n=p$. Then there exist $\zeta_1,\ldots,\zeta_n$, which are pairwise distinct $q^{th}$ roots of $z$ with product $\zeta_1\cdots\zeta_n = \mu_0^p$. There also exist $\eta_1,\ldots,\eta_n$, which are pairwise distinct $p^{th}$ roots of $z$ with product $\eta_1\cdots\eta_n = \mu_0^q$.
\label{LemDistinctRoots}
\end{lem}
\begin{proof}
	Consider the case $n<p$. Letting $\abs x$ denote the magnitude of $x\in\bb C$ and $\arg(x)$ its argument (mod $2\pi$), choose $k$ with $0\le k<n$ so that $\arg(z)=\frac{pq}n\arg(\mu_0)+\frac{2\pi k}n$. Each $\zeta_i$ is defined by its argument, with the understanding that $\abs{\zeta_i}=\abs z^{1/q}$. Let $\arg(\zeta_1)=\frac{\arg(z)}q-\frac{2\pi k}q$. If $n$ is odd, define $\zeta_2,\ldots,\zeta_n$ to have arguments distinct from $\zeta_1$, and differing from $\arg(z)/q$ by a multiple of $2\pi/q$, such that
	\begin{align}
	\arg(\zeta_i\zeta_{i+1})=\frac {2\arg(z)}q,\text{ when $i\ge 2$ is even.}\label{ZetaArgs}
	\end{align}
	When $n$ is even define $\zeta_2,\ldots,\zeta_{n-1}$ as in the odd case, and take $\arg(\zeta_n)=\arg(z)/q$. 

	By (\ref{ZetaArgs}) we have $\arg(\zeta_1\ldots\zeta_n)=\frac{n\arg(z)}q-\frac{2\pi k}q=\arg(\mu_0^p)$, and so $\zeta_1\ldots\zeta_n=\mu_0^p$. As $n<p$ we have $n\le q-2$. This allows the arguments of $\zeta_1,\ldots,\zeta_n$ to be chosen pairwise distinct: we only need to avoid the arguments $\frac{\arg(z)}q+\frac{2\pi k}q$ and $\frac{\arg(z)}q+\pi(n-1)$. Thus we may choose $\zeta_1,\ldots,\zeta_n$ as distinct $q^{th}$ roots of $z$.

	To choose $\eta_1,\ldots,\eta_n$ as $p^{th}$ roots of $z$, follow a similar procedure, at least when $n\le p-2$ (here the $\eta_i$ have magnitude $\abs z^{1/p}$ and arguments differ from $\arg(z)/p$ by a multiple of $2\pi/p$). If $n=p-1$, and $p$ is even, define $\eta_i$ for each $1\le i\le n$ to be determined by the set of $p-1$ arguments $\setn{(\arg(z)+2\pi j)/p}{1\le j\le p, j\not\equiv k-p/2\mod p}$. With this choice we have that $(\eta_i)^p = z$ for each $i$ and, 
		\begin{equation*}
		\arg(\eta_1\ldots\eta_n) = \frac{n\arg(z)+(p-2k)\pi}p+\pi = \frac{n\arg(z)-2\pi k}p = \arg(\mu_0^q),
		\end{equation*} 

	which shows that $\eta_1\ldots\eta_n = \mu_0^q$.

	To check the claim when $n=p$, a similar argument can be carried out for $z=(-1)^{n-1}\mu_0^q$. However, we are not free to change the argument of $z$ by altering $k$. If $\eta_1,\ldots,\eta_p$ are $p$ distinct $p^{th}$ roots of $z$ and $\eta_1\ldots\eta_p=\mu_0^q$, then $\arg(z)+(p-1)\pi = \arg(\mu_0^q)$.
\end{proof}

\begin{lem}
Let $\mu_0\in\bb C^*$. If $1\le n\le p<q$, there exists $X, Y\in\text{GL}_n\bb C$ such that
		\en{
			\item[(a)] for $r,s$ with $rp+sq=1$, $X^sY^r=M$, where $M$ is the $n\times n$ diagonal matrix $\text{diag}[\mu_0,1,\ldots,1]$;
			\item[(b)]$X^p=Y^q=z I$, where $I$ is the identity matrix, and $z$ is any $n^{th}$ root of $\mu_0^{pq}$ if $n<p$ and $z=(-1)^{n-1}\mu_0^q$ when $n=p$.
		}
\label{LemTorusKnot}
\end{lem}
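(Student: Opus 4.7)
The plan is to construct the matrices $X$ and $Y$ explicitly. Since $X^p=Y^q=zI$ force $X$ and $Y$ to be diagonalizable with eigenvalues among the $p$-th and $q$-th roots of $z$ respectively, and since $n\le p<q$ guarantees enough distinct such roots exist, the bulk of the work lies in arranging the non-commutative product $X^sY^r$ to equal $M$ while maintaining the eigenvalue constraints.

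I would proceed by putting $Y$ in the structured form of the matrix $\widetilde{Y}$ appearing in Lemma \ref{LemCharPoly}, selecting the free parameters $x_i$ and $y_j$ so that the characteristic polynomial of $Y$ is $\prod_{i=1}^n(t-\beta_i)$, for $\beta_1,\ldots,\beta_n$ a choice of $n$ distinct $q$-th roots of $z$. Lemma \ref{LemCharPoly} provides explicit formulas for the entries in terms of the elementary symmetric functions of the $\beta_i$. Distinctness of the $\beta_i$ forces $Y$ to be diagonalizable with minimal polynomial dividing $t^q-z$, yielding $Y^q=zI$ as required. For $X$, I would use the relation $X^s=MY^{-r}$ forced by (a), together with $\gcd(s,p)=1$: writing $as+bp=1$ via Bezout, setting $X=(MY^{-r})^a z^b I$ gives $X^s=MY^{-r}$ and $X^p=zI$ provided one first verifies the key identity $(MY^{-r})^p=z^s I$.

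The main obstacle will be this compatibility verification, namely that the chosen $Y$ makes $(MY^{-r})^p=z^s I$ hold. Expanding the $p$-th power using $M=I+(\mu_0-1)E_{11}$ and exploiting the spectral structure of $Y^{-r}$ (whose eigenvalues $\beta_i^{-r}$ and eigenvectors are determined by the companion data) reduces this to an algebraic identity tying $\mu_0$, the chosen roots $\beta_i$, and the exponents $p,q,r,s$ (via $rp+sq=1$) together. This is where the two cases of the lemma manifest: when $n<p$ the lemma permits $z$ to be any $n$-th root of $\mu_0^{pq}$, reflecting genuine freedom in the choice of eigenvalues of $X$; when $n=p$ all $p$ distinct $p$-th roots of $z$ are forced to appear as eigenvalues of $X$, so $\det X=(-1)^{p-1}z$, and combining this with the determinant relation $(\det X)^s(\det Y)^r=\det M=\mu_0$ pins down $z=(-1)^{n-1}\mu_0^q$.

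I expect the compatibility identity to be the crux of the proof. My plan to attack it is to exploit Lemma \ref{LemCharPoly} a second time: track eigenvalues through the product $MY^{-r}$, reducing $(MY^{-r})^p$ to a companion-matrix computation whose characteristic polynomial can be read off in closed form, then compare with $t^n-z^s$ or an analogous polynomial governing $z^s I$. The split between $n<p$ and $n=p$ naturally arises from how many of the $p$-th roots of $z$ are needed to account for the $n$ eigenvalues of $X$.
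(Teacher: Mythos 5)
Your reduction is fine as far as it goes: granting the identity $(MY^{-r})^p=z^sI$, the Bezout definition $X=(MY^{-r})^a z^b$ with $as+bp=1$ does give $X^s=MY^{-r}$ and $X^p=zI$, hence $X^sY^r=M$. But that identity is essentially the entire content of the lemma, and your plan for establishing it does not work as stated. Prescribing the characteristic polynomial of $Y$ controls only the spectrum of $Y$; the eigenvalues of the product $MY^{-r}$ are not determined by the spectra of the factors, so nothing in your construction forces $MY^{-r}$ to be diagonalizable with eigenvalues that are $p$-th roots of $z^s$. Worse, in your setup it is $Y$ that is given the structured form of Lemma \ref{LemCharPoly}; the power $Y^{-r}$ is not of that form, and hence neither is $MY^{-r}$, so the promised ``second application'' of Lemma \ref{LemCharPoly} to read off $\det(tI-MY^{-r})$ in closed form is not available.

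The paper resolves precisely this point by allocating the structured form differently: the structured matrix $\widetilde Y$ is taken to be $Y^{-r}$ itself (with eigenvalues $\zeta_j^{-r}$ for distinct $q$-th roots $\zeta_j$ of $z$), so that $\widetilde X:=M\widetilde Y$ again has the shape covered by Lemma \ref{LemCharPoly}, its first row being that of $\widetilde Y$ scaled by $\mu_0$. One then solves in two recursive stages: for arbitrary $x_i$ the $y_{n-i}$ are chosen so that $\widetilde Y$ has characteristic polynomial $\prod_j(t-\zeta_j^{-r})$, and then, since the coefficient of $t^{n-i}$ in $\det(tI-\widetilde X)$ is $-(\mu_0-1)x_i$ plus terms involving only $x_l$ with $l<i$, the $x_i$ are chosen to match all but the constant coefficient of $\prod_j(t-\eta_j^{s})$, where the $\eta_j$ are distinct $p$-th roots of $z$. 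The one remaining constraint is the determinant, and it is satisfied only because the roots are selected with the prescribed products $\prod_j\zeta_j=\mu_0^p$ and $\prod_j\eta_j=\mu_0^q$; showing that distinct roots with these exact products exist is the argument-counting part of the paper's proof, is where $n\le p<q$ is genuinely used (with a separate tweak when $n=p-1$ and $p$ is even), and is what forces $z=(-1)^{n-1}\mu_0^q$ when $n=p$, since then all $p$-th roots of $z$ must occur and $\prod_j\eta_j=(-1)^{p-1}z$. Your proposal imposes no product condition on the $\beta_i$ at all, and your determinant remark for $n=p$ is not by itself conclusive because $\det Y$ also depends on which $q$-th roots are chosen. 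Without a mechanism for controlling the characteristic polynomial of $MY^{-r}$ and without the prescribed-product root selection, the compatibility identity remains unproved, so the proposal has a genuine gap at its central step.
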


\begin{proof}
	Define $c^Y(t)=\prod_{j=1}^n(t-\zeta_j^{-r})=c_0^Y+\cdots+c_{n-1}^Yt^{n-1}+t^n$ with $\zeta_1,\ldots,\zeta_n$ given by Lemma \ref{LemDistinctRoots}. Let $\wt Y$ be a matrix of the form in Lemma \ref{LemCharPoly}. We would like to choose entries $y_i$ of $\wt{Y}$ so that the characteristic polynomial $a(t)$ agrees with $c^Y(t)$. But Lemma \ref{LemCharPoly} shows that $a_{n-i}$ is linear in $y_{n-i}$ and independent of $y_l$ if $l<n-i$. Thus we may set 
		\[y_{n-1}=-c_{n-1}^Y-x_1,\]
	then recursively equate $y_{n-i}=-c_{n-i}^Y-x_i+\sum_{j=1}^{i-1}x_jy_{n-i+j}$ for $1<i\le n$. The resulting matrix $\wt{Y}$ has $c^Y(t)$ as its characteristic polynomial.
	
	For $\eta_1,\eta_2,\ldots,\eta_n$ as in Lemma \ref{LemDistinctRoots}, define $c^X(t)=\prod_{j=1}^n(t-\eta_j^s)=c_0^X+\cdots+c_{n-1}^Xt^{n-1}+t^n$. Setting $\wt{X}=M\wt{Y}$ with the $y_{n-i}$ determined as above, consider the formula for $y_{n-i}$, $1\le i\le n$. Note that $y_{n-i}$ is linear in $x_i$ and is independent of $x_l$ for $l>i$. Thus, if we replace $\wt{X}$ for $\wt{Y}$ in Lemma \ref{LemCharPoly} we see that the coefficient of $t^{n-i}$ in the characteristic polynomial of $\wt{X}$ has coefficients
	\[b_{n-i}= -(\mu_0-1)x_i +b_i'\]
	where $b_i'$ is a polynomial in variables $x_l$ with $l<i$ (here we have $b_1'=c_{n-1}^Y$).
	Since $\mu_0\ne1$, we may set $x_1=(-c_{n-1}^X+b_1')/(\mu_0-1)$, and recursively solve for each $x_i,$ for $1<i\le n-1$ so that $b_{n-i}=c_{n-i}^X$. Using the equations $\wt{X}=M\wt{Y}$, $\zeta_1\ldots\zeta_n = \mu_0^p$, and $\eta_1\ldots\eta_n=\mu_0^q$ we know that $b_0=c_0^X$ since 
	\[\det\wt{X}=\mu_0\det\wt{Y}=\mu_0(-1)^n(\prod_{j=1}^n\zeta_j^{-r})=(-1)^n\mu_0^{1-rp}=(-1)^n\mu_0^{sq}=(-1)^n\prod_{j=1}^n\eta_j^s.\]
	As a result, upon setting the $x_i$ equal to their solution, the characteristic polynomial of $\wt{X}$ is $c^X(t)$.
	
	The eigenvalues of $\wt{X}$ are pairwise distinct, so $\wt{X}$ is diagonalizable. Find $P$ such that $\wt{X}=P^{-1}\wt{D}P$, where $\wt{D}=\text{diag}[\eta_1^s,\eta_2^s,\ldots,\eta_n^s]$, then define $X=P^{-1}DP$ where $D=\text{diag}[\eta_1,\eta_2,\ldots,\eta_n]$. Define $Y$ in a similar manner, so that it is conjugate to $\text{diag}[\zeta_1,\zeta_2,\ldots,\zeta_n]$ by some $Q$ with $\wt{Y}=Q^{-1}\text{diag}[\zeta_1^{-r},\ldots,\zeta_n^{-r}]Q$.
	
	Now we have $X^s=P^{-1}D^sP=\wt{X}=M\wt{Y}=MY^{-r}$. Moreover, $X^p-z I=0$ and $Y^q-z I=0$ by the Cayley-Hamilton theorem.
\end{proof}

\vspace*{6pt}
\noindent{\bf Theorem \ref{Thmtorusknot}.}\ \ {\it Given $1\le p<q$, with $p,q$ relatively prime, let $T(p,q)$ denote the $(p,q)${--}torus knot and $\pi$ its knot group. For every $1\le n\le p$ and each $\mu_0\in\bb C^*$ there is a degree $n$ irreducible \augrep of $\pi$ with $\mu_0$ as an eigenvalue of the meridian $m$. In fact, 
	\[\wt{A}_{T(p,q)}=(\lambda\mu^{pq-q}+(-1)^p)\prod_{n=1}^{p-1}(\lambda^n\mu^{(n-1)pq}-1).\]}
\vspace*{6pt}

\begin{proof}
Considering the knot group for $T(p,q)$, the existence of $X,Y$ guaranteed by Lemma \ref{LemTorusKnot} provides a well-defined \augrep via $\rho(m)=M$, $\rho(x)=X$ and $\rho(y)=Y$. It follows $\rho(\ell)$ is the diagonal matrix $\text{diag}[\lambda_0,\ast,\ldots,\ast]$ with $z\mu_0^{-pq}=\lambda_0$, proving that $\wt{A}_{T(p,q)}$ has the desired factors.
	
That this accounts for all factors of $\wt{A}_{T(p,q)}$ is seen by showing that every KCH irrep of $\pi$ has this form. This is an application of Schur's Lemma. Note $x^p$ commutes with every element of $\pi$, since $x^py=y^{q+1}=yx^p$. Now an eigenspace $E_z$ of $\rho(x^p)$ must be an invariant subspace, so $E_z=\bb C^n$ and $\rho(x^p)$ is $z I$. Since $\ell$ is a commutator, $\det(\rho(\ell))=1$, so $\det(\rho(x^p))=\det(\rho(\ell m^{pq}))=\mu_0^{pq}$. Thus $z$ is an $n^{th}$ root of $\mu_0^{pq}$. 

That these representations are irreducible is deduced from Lemma \ref{KCHIrreps}. The bound on degree comes from Theorem \ref{ThmDimBound}.
\end{proof}

\subsection{\augreps for $(-2,3,2k+1)$ pretzel knots}
\label{subsecPretzelKnots}

In this section we find irreducible, degree three \augreps for the $(-2,3,2k+1)$ pretzel knots, which have a projection as shown in Figure \ref{FigPretzKnot}. Similar to the case of torus knots, the proof of irreducibility relies on Lemma \ref{KCHIrreps}. 

\begin{figure}[ht]
\begin{tikzpicture}[scale=.45,>=stealth]
%% meridian, w %%%%%%%%%%%%%%%%%%%%%%%%%%
	\draw[thick,->]
	(-1.5,5) -- node[at start,right]{{\footnotesize $m$}} (-1.75,3.75);	
	\draw[thick,->]
	(3.5,3.75) -- node[at end,right]{{\footnotesize $w$}} (4,6.25);
%% longitude %%%%%%%%%%%%%%%%%%%%%%%%%
	\draw[blue]
	(-5.65,4) ..controls (-5.65,3.75) and (-6.65,3.5) .. (-6.65,3)
			  ..controls (-6.65,2.5) and (-5.65,2.25) .. (-5.65,2);
	\draw[blue,xscale=-1]
	(6-5.65,4-1) ..controls (6-5.65,3.75-1) and (6-6.65,3.5-1) .. (6-6.65,3-1)
			  ..controls (6-6.65,2.5-1) and (6-5.65,2.25-1) .. (6-5.65,2-1);
	\draw[blue]
	(12-5.65,4-3) ..controls (12-5.65,3.75-3) and (12-6.65,3.5-3) .. (12-6.65,3-3)
			  ..controls (12-6.65,2.5-3) and (12-5.65,2.25-3) .. (12-5.65,2-3);
	\draw[blue]
	(-5.65,2) ..controls (-5.65,-2.25) and (-.35,-2) .. (-.35,1);
	\draw[blue]
	(-0.35,3) ..controls (-0.35,3.25) and (0.65,3.5) .. (0.65,3.8);
	\draw[draw=white,very thick,double=blue]
	(.65,3.8) ..controls (0.65,4.3) and (5.35,4.3) .. (5.35,3.8);
	\draw[blue]
	(5.35,3.8)..controls (5.35,3.5) and (6.35,3.25) .. (6.35,3);
	\draw[blue]
	(6.35,-1) ..controls (6.35,-4.25) and (-6.35,-5.75) .. (-6.35,2);
	\draw[blue]
	(-6.35,2) ..controls (-6.35,2.15) .. (-6.1,2.3);
	\draw[blue]
	(-5.85,2.4)   ..controls (-5.35,2.75) .. (-5.35,3)
				   ..controls (-5.35,3.5) and (-6.35,3.75) ..(-6.35,4);
	\draw[draw=white,very thick,double=blue]
	(-6.35,4) ..controls (-6.35,6.25) and (6.35,6.25) .. (6.35,4);
	\draw[blue]
	(6.35,4) ..controls (6.35,3.75) and (5.35,3.5) ..(5.35,3);
	\draw[blue]
	(5.35,1)  ..controls (5.35,0.5) and (6.35,0.25) .. (6.35,0)
			  ..controls (6.35,-0.35) and (5.35,-.4)..(5.35,-1);
	\draw[blue]
	(5.35,-1) ..controls (5.35,-1.85) and (.65,-2.6) ..(.65,1);
	\draw[blue,xscale=-1]
	(5.35-6,1+2)  ..controls (5.35-6,0.5+2) and (6.35-6,0.25+2) .. (6.35-6,0+2)
			  ..controls (6.35-6,-0.35+2) and (5.35-6,-.4+2)..(5.35-6,-1+2);
	\draw[blue]
	(.65,3) ..controls (.65,3.5) and (-.35,3.75) ..(-.35,4);
	\draw[draw=white,very thick,double=blue]
	(-.35,4)..controls (-.35,4.6) and (-2,4.6) ..(-3,4.6);
	\draw[blue,->,thin]
	(-.35,4)..controls (-.35,4.6) and (-2,4.6) ..node[at end,above]{{\tiny $L$}} (-3,4.6);
	\draw[blue]
	(-3,4.6)..controls (-4,4.6) and (-5.65,4.6) ..(-5.65,4);
	%% end of longitude %%%%%%%%%%%%%%%%%%
	%% the left twist %%%%%%%%%%%%%%%%%%%%
	\draw
	(-6.5,2) ..controls (-6.5,2.5) and (-5.5,2.5) ..(-5.5,3)
	(-6.5,3) ..controls (-6.5,3.5) and (-5.5,3.5) ..(-5.5,4);
	\draw[draw=white,double=black,very thick]
	(-6.5,3) ..controls (-6.5,2.5) and (-5.5,2.5)..(-5.5,2)
	(-6.5,4) ..controls (-6.5,3.5) and (-5.5,3.5)..(-5.5,3);
	%% the middle twist %%%%%%%%%%%%%%%%%%%%	
	\draw
	(-.5,2) ..controls (-.5,1.5) and (.5,1.5)..(.5,1)
	(-.5,3) ..controls (-.5,2.5) and (.5,2.5)..(.5,2)
	(-.5,4) ..controls (-.5,3.5) and (.5,3.5)..(.5,3);
	\draw[draw=white,double=black,very thick]
	(-.5,1) ..controls (-.5,1.5) and (.5,1.5) ..(.5,2)
	(-.5,2) ..controls (-.5,2.5) and (.5,2.5) ..(.5,3)
	(-.5,3) ..controls (-.5,3.5) and (.5,3.5) ..(.5,4);
	%% the right twist %%%%%%%%%%%%%%%%%%%%	
	\draw
	(6-.5,0) ..controls (6-.5,-.5) and (6.5,-.5)..(6.5,-1)
	(6-.5,1) ..controls (6-.5,.5) and (6.5,.5)..(6.5,0)
	(6-.5,4) ..controls (6-.5,3.5) and (6.5,3.5)..(6.5,3);
	\draw[draw=white,double=black,very thick]
	(6-.5,-1) ..controls (6-.5,-.5) and (6.5,-.5) ..(6.5,0)
	(6-.5,0) ..controls (6-.5,.5) and (6.5,.5) ..(6.5,1)
	(6-.5,3) ..controls (6-.5,3.5) and (6.5,3.5) ..(6.5,4);
	\draw (6,2.25) node {{\footnotesize $\vdots$}};
	
	%% connectors %%%%%%%%%%%%%%%%%%%%%%%%%%%%%
	\draw[draw=white,very thick,double=black]
	(-5.5,4) ..controls (-5.5,4.5) and (-.5,4.5).. (-.5,4);
	\draw[draw=white,very thick,double=black]
	(.5,4) ..controls (.5,4.5) and (5.5,4.5).. (5.5,4);
	\draw[draw=white,very thick,double=black]
	(-6.5,4) ..controls (-6.5,6.5) and (6.5,6.5).. (6.5,4);
	\draw
	(-6.5,2) ..controls (-6.5,-6) and (6.5,-4.5).. (6.5,-1)
	(-5.5,2) ..controls (-5.5,-2) and (-.5,-1.75).. (-.5,1)
	(.5,1) ..controls (.5,-2.75) and (5.5,-2.1).. (5.5,-1);
	%% label %%%%%%%%%%%%%%%%%%%%%%
	\draw[thick] 
	(7,4.5) ..controls (8,4.5) and (7,2).. (8,1.75)
	(7,-1) ..controls (8,-1) and (7,1.5).. (8,1.75);
	\draw (9.5,1.75) node {{\footnotesize $2k+1$}};
	
\end{tikzpicture}
\caption{The $(-2,3,2k+1)$ pretzel knot with a blackboard framed longitude $L$.}
\label{FigPretzKnot}
\end{figure}
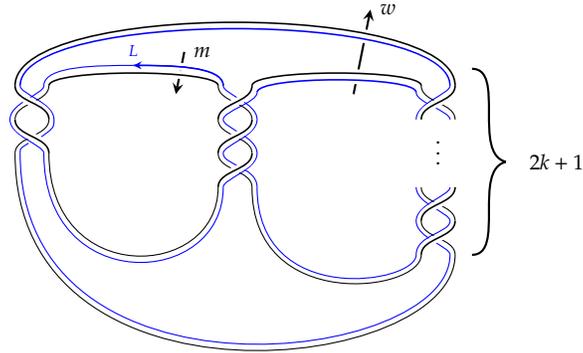

\vspace*{6pt}
\noindent{\bf Theorem \ref{ThmPretzels}.}\ \ {\it Let $K$ be the $(-2,3,2k+1)$ pretzel knot, where $k\ne -1,0$. Then there is a 3-dimensional, irreducible \augrep of $\pi_K$. Furthermore, $(1-\lambda\mu^{2k+6})$ divides $A_K^3(\lambda,\mu)$.}
\vspace*{6pt}

\begin{proof}
	The group $\pi_K$ admits a two-generator presentation $\pi_K=\langle{m,w \mid w^kE=Fw^k}\rangle$, where $m$ is a meridian, $E=mwm^{-1}w^{-1}m^{-1}$, $F=m^{-1}w^{-1}mwmw^{-1}$, and $w$ is a product of two meridians \cite[\S 4]{LT12}. As a consequence of Theorem \ref{ThmDimBound}, any irreducible \augrep that sends $m$ to $M=\text{diag}[\mu_0,1,\ldots,1]$ has degree 3 or less.

	Let us first observe what would be required of an augmentation $\epsilon:HC_0(K)\to\bb C$ that is induced from $\rho:\pi_K\to\text{GL}_3\bb C$, a KCH irrep. We will abuse notation and write $\mu$ for $\epsilon(\mu)=\mu_0$. By Remark \ref{Rem2genForm} in some basis we have $\rho(m)=\text{diag}[\mu,1,1]$ and
		\[\rho(w)=\begin{pmatrix}x_1&x&y_0\\ 1&0&y_1\\ 0&1&y\end{pmatrix}.\]
	As $\rho$ induces the augmentation we must have $x_1\ee\frac{[w]}{1-\mu}$. Note that $y_0-x_1y_1-xy=\det(\rho(w))=\mu^2$ since $w$ is the product of two conjugates of $m$. Furthermore, an examination of the upper left entry of $\rho(w)^{-1}$ requires that $\frac{[w^{-1}]}{1-\mu}\ee\rho(w^{-1})_{11}=-\frac{y_1}{\mu^2}$ in this basis. Thus, for our supposed representation, we have that in some basis and for some pair $x,y\in\bb C$,
		\begin{equation}
			\rho(w)=\begin{pmatrix}\frac{\epsilon([w])}{1-\mu}&x&\mu^2-\mu^2\frac{\epsilon([w])\epsilon([w^{-1}])}{(1-\mu)^2}+xy\\ 1 & 0 & -\mu^2\frac{\epsilon([w^{-1}])}{1-\mu}\\ 0&1&y\end{pmatrix}.
		\label{EqnWmatrix}
		\end{equation}

	By considering the relations (\ref{epsRel}) on the values of $\epsilon$ and that $w^k=Fw^kE^{-1}$ in $\pi_K$, we obtain
		\[[w^k]\ee\mu^{-2}([w^{-1}])^2[w^{k+1}]-2\mu^{-1}[w^k][w^{-1}]\left(1+\mu^{-1}[w][w^{-1}]\right)+[w^{k-1}]\left(1+\mu^{-1}[w][w^{-1}]\right)^2.\]
	Were $\epsilon$ to satisfy $\epsilon([w])=1$ and $\epsilon([w^{-1}])=-\mu$, then this relation would become $[w^k]\ee[w^{k+1}]$, implying that the (1,1)-entry of $\rho(w)^{k+1}$ would agree with that of $\rho(w)^k$.

	Motivated by these observations, let us define $W$ to be the matrix in (\ref{EqnWmatrix}) with $\epsilon([w])$ replaced by $1$ and $\epsilon([w^{-1}])$ replaced by $-\mu$. Denote by $E_0$ (and $F_0$) the matrix corresponding to $\rho(E)$ (and $\rho(F)$ respectively) in our basis.

	\begin{lem}Let $W$ be defined as above, but with $y$ set equal to $1+\mu+(\mu^{-1}-1)x$, and let $M=\text{diag}[\mu,1,1]$. Suppose that $(W^k)_{11}=(W^{k+1})_{11}$ and define $R=W^kE_0-F_0W^k$. Then $R=0$.
	\label{LemPretzelsRis0}
	\end{lem}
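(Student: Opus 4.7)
The plan is to verify the identity $W^k E_0 = F_0 W^k$ by a structured matrix computation that isolates where the two hypotheses do their work. The starting observation is that $M = I + (\mu-1)e_1 e_1^T$ is a rank-one perturbation of the identity, and likewise $M^{-1} = I + (\mu^{-1}-1)e_1 e_1^T$. Expanding $E_0 = MWM^{-1}W^{-1}M^{-1}$ and $F_0 = M^{-1}W^{-1}MWMW^{-1}$ by multilinearity in these decompositions, each of $E_0$ and $F_0$ becomes the ``abelian'' piece $WW^{-1}=I$ (or $W^{-1}W=I$) plus a short list of rank-one corrections, whose outer factors are built from $e_1$ together with explicit columns of $W^{\pm 1}$ and rows of $W^{\pm 1}$. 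Because $\det W = \mu^2$, the matrix $W^{-1}$ is computable in closed form from $W$, so the coefficients of these rank-one corrections are explicit rational functions of $\mu, x, y$.

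Next I would assemble $R = W^k E_0 - F_0 W^k$ from these expansions. The key structural fact is that every rank-one summand in $E_0$ (respectively $F_0$) contains an $e_1 e_1^T$ factor adjacent to where $W^k$ gets multiplied, so $W^k$ interacts with these corrections only through (i) the column $W^k e_1$, (ii) the covector $e_1^T W^k$, and (iii) the scalars $(W^k)_{11}$ and $(W^{k+1})_{11}$; the latter appears exactly when a factor $W$ sits between two copies of $e_1 e_1^T$ in an expanded term. The hypothesis $(W^k)_{11} = (W^{k+1})_{11}$ is then precisely what is needed to identify all these scalar coefficients and collapse the $W^k$-dependence of $R$ to just $W^k e_1$ and $e_1^T W^k$ with matched scalars.

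What remains is a sum of rank-one matrices with coefficients that are rational functions of $\mu, x, y$. At this stage I would substitute $y = 1 + \mu + (\mu^{-1}-1)x$ and check that each residual coefficient vanishes. Conceptually, this trace-like constraint on $W$ is the matrix-level counterpart of the cord-algebra identity $[b^{n+1}] \ee \mu[b^n] - ([m]-[b])$ from the $2$-bridge setting: it is the one relation that ensures the non-abelian factors of $M$ and $W$ interact consistently within the group presentation.

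The main obstacle is computational bookkeeping rather than a conceptual difficulty. Expanding $E_0$ and $F_0$ fully produces on the order of a dozen rank-one corrections each, and one must carefully track which $e_1 e_1^T$ factor absorbs a $W$ into a $(W^{k+1})_{11}$ versus a $(W^k)_{11}$. Once this organization is in place, though, the verification that $R=0$ reduces to a small finite set of scalar identities in $\bb C(\mu, x)$ rather than a nine-entry matrix identity depending on unknown entries of $W^k$.
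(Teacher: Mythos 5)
There is a genuine gap in the step where you claim the $W^k$-dependence of $R$ ``collapses'' so that $R=0$ reduces to scalar identities in $\bb C(\mu,x)$. First, the abelianization of $F=m^{-1}w^{-1}mwmw^{-1}$ is $mw^{-1}$, not the identity, so when you expand $F_0=M^{-1}W^{-1}MWMW^{-1}$ using $M^{\pm1}=I+(\mu^{\pm1}-1)E_{11}$ the leading term is $W^{-1}$, not $I$. Consequently $R=W^kE_0-F_0W^k$ contains the non-rank-one piece $W^k-W^{k-1}=W^{k-1}(W-I)$ (generically of rank two, since the choice $y=1+\mu+(\mu^{-1}-1)x$ makes $1$ an eigenvalue of $W$) in addition to the rank-one corrections. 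Second, carrying out your expansion, the rank-one terms that survive have outer factors $W^{k+1}e_1$, $W^{k}e_1$, $e_1^TW^{k}$ and $e_1^TW^{k-1}$, which are unknown vectors, while the only scalars produced by the contractions are $W_{11}$ and $(W^{-1})_{11}$; the scalars $(W^k)_{11}$ and $(W^{k+1})_{11}$ never arise as coefficients because $W^k$ sits on the outside of both products. So $R$ is not a combination of rank-one matrices with coefficients in $\bb C(\mu,x,y)$, and its vanishing cannot be checked coefficient-by-coefficient: it is a nine-entry identity constraining the entries of $W^k$, and the single scalar hypothesis $(W^k)_{11}=(W^{k+1})_{11}$ together with rational identities in $\mu,x,y$ is not, by itself, enough bookkeeping to force the rank-two piece to cancel against the rank-one ones.

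What your outline is missing is the extra structural input the paper exploits: the commutation $W\cdot W^k=W^k\cdot W$. Comparing entries of $W^kW$ and $WW^k$, and applying the hypothesis to both factorizations of $W^{k+1}$, yields linear relations (the paper's (\ref{eqn1}), (\ref{eqn2}), (\ref{21}), (\ref{31}), (\ref{32}), (\ref{33})) that express $(W^k)_{12},(W^k)_{22},(W^k)_{32},(W^k)_{23},(W^k)_{33}$ in terms of $(W^k)_{11},(W^k)_{21},(W^k)_{31}$, together with one further relation among these three. Only after this reduction do the entries of $R$ become expressions that vanish identically once $y=1+\mu+(\mu^{-1}-1)x$ is imposed (via the identity $\frac{1-y(1-\mu)-\mu^2}{1-\mu}+\frac{1-\mu}{\mu}x=0$). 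Your rank-one decomposition of $M^{\pm1}$ is a reasonable way to organize the computation of $E_0$ and $F_0$ themselves, but to complete the proof you must still import the commutation relations (or equivalent spectral information about $W$) to control the unknown entries of $W^k$; as written, the proposed reduction to ``a small finite set of scalar identities'' does not exist.
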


	Lemma \ref{LemPretzelsRis0} is proved below; let us first use it to complete the proof of Theorem \ref{ThmPretzels}. For a given $\mu\in\bb C^*$, $\mu\ne1$, note that if $y=1+\mu+(\mu^{-1}-1)x$ and $k\ne0,-1$ then $\Phi_k=(W^{k+1})_{11}-(W^k)_{11}$ is a polynomial in $x$ with positive degree. Thus it has a root in $\bb C$; set $x$ as a root. Lemma \ref{LemPretzelsRis0} implies that from $m\mapsto M$ and $w\mapsto W$ we obtain a well-defined homomorphism $\rho:\pi_K\to \text{GL}_3\bb C$.

	To see that $\rho$ is irreducible, we use the following lemma. 

	\begin{lem}
	If $\epsilon$ is the augmentation induced from the \augrep $\rho:\pi_K\to\text{GL}_3\bb C$ defined above and $\ell\in\pi_K$ the longitude of $K$, then $\epsilon(\lambda)=\mu^{-(2k+6)}$.
	\label{LemLongitude}
	\end{lem}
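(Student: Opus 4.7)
The plan is to express the 0-framed longitude $\ell$ explicitly as a word in $m$ and $w$ using the diagram in Figure \ref{FigPretzKnot}, then compute the $(1,1)$-entry of $\rho(\ell)$ using the specific form of $M$ and $W$ together with the defining identity $(W^k)_{11}=(W^{k+1})_{11}$.

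First I would read off a blackboard-framed longitude $L$ from Figure \ref{FigPretzKnot}. Tracing $L$ around the diagram and applying the Wirtinger-style calculus compatible with the two-generator presentation $\pi_K=\langle m,w\mid w^kE=Fw^k\rangle$, one can write $L$ as a product involving $w^{\pm k}$, the words $E$ and $F$, and a bounded number of meridians. The writhe of the diagram is $-2+3+(2k+1)=2k+2$, so the $0$-framed longitude is $\ell=L\,m^{-(2k+2)}$. The target eigenvalue $\mu^{-(2k+6)}=\mu^{-4}\cdot\mu^{-(2k+2)}$ then suggests that the blackboard contribution $(L)_{11}$ should equal $\mu^{-4}$, which is a good sanity check on any explicit expression for $L$.

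Next, since $\ell$ commutes with $m$ in $\pi_K$ and $\rho(m)=\mathrm{diag}[\mu,1,1]$ with $\mu\neq 1$, the matrix $\rho(\ell)$ must preserve the one-dimensional $\mu$-eigenspace $\mathbb{C}\cdot e_1$. Consequently $\rho(\ell)e_1=\lambda_0 e_1$ with $\lambda_0=\epsilon(\lambda)$, and I only need to compute the top-left entry of $\rho(\ell)$ rather than the whole matrix. Several structural features simplify this computation: $M$ acts as the identity on $\mathrm{span}(e_2,e_3)$, so inserting $M^{\pm 1}$ into a product only changes the first row/column of the surrounding factors; the determinant condition $\det W=\mu^2$ (since $w$ is the product of two conjugates of $m$) and $\det\rho(\ell)=1$ pin down the remaining $2\times 2$ block up to sign; and the hypothesis $(W^k)_{11}=(W^{k+1})_{11}$, built into our choice of $x$, lets us collapse any occurrence of $W^k$ coming from $L$ to $W^{k+1}$ at the price of a rank-one error that lies off the top-left entry.

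The main obstacle is the bookkeeping of $W^k$ times the matrices $E_0$ and $F_0$. To tame this I would exploit the identity $W^kE_0-F_0W^k=R=0$ from Lemma \ref{LemPretzelsRis0} to move all $W^k$'s to one side of any longitude expression, reducing $L$ to a word of bounded length in $W^{\pm 1}$ and $M^{\pm 1}$ (with perhaps a single $W^{k+1}-W^k$ term that vanishes in the $(1,1)$-entry). After this reduction the computation becomes a finite symbolic calculation independent of $k$, whose output is the fixed quantity $\mu^{-4}$; combined with the framing correction $\mu^{-(2k+2)}$ this yields $\epsilon(\lambda)=\lambda_0=\mu^{-(2k+6)}$.

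An alternative that avoids matrix algebra altogether is to work entirely inside the cord algebra: one uses the identities \eqref{epsRel} together with $\epsilon([w])=1,\ \epsilon([w^{-1}])=-\mu$ and the relation $w^kE=Fw^k$ to derive $\epsilon([\ell\gamma])=\mu^{-(2k+6)}\epsilon([\gamma])$ for all $\gamma\in\pi_K$, which by definition identifies $\epsilon(\lambda)$. This cord-algebra route is preferable if the matrix product for $L$ proves unwieldy, since every step is a routine application of the recursive identities already established in the proof of Theorem \ref{Thm2bridge}.
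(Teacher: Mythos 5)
Your plan runs into two genuine problems. First, the framing correction is wrong: with the orientation in Figure \ref{FigPretzKnot} the two crossings of the $-2$ tangle are \emph{positive} (the strands there are antiparallel), so the blackboard-framed longitude satisfies $\ell=m^{-(2k+6)}L$, not $\ell=L\,m^{-(2k+2)}$. Consequently the quantity you must establish is $\langle\rho(L)e_1,e_1\rangle=1$, i.e.\ $[L]\ee 1-\mu$, rather than $(L)_{11}=\mu^{-4}$; a computation organized around the expectation $\mu^{-4}$ will not close up, and the ``sanity check'' you propose would in fact flag a contradiction.

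Second, the heart of the lemma is exactly the part you declare routine. The word $L$ contains $w^{k}$ twice as well as $E^{k}$ and $(Ew)^{-k}$, and the identity $W^kE_0=F_0W^k$ only moves these blocks around; it does not reduce $L$ to a word of bounded length independent of $k$, and the claim that replacing $W^k$ by $W^{k+1}$ produces ``a rank-one error that lies off the top-left entry'' fails once that error is multiplied on both sides by other matrices. After the cord-algebra simplifications one is left needing the $k$-dependent evaluations $[w^{k}mw^{k}mw]\ee-\mu^{2k+1}$ and $[w^{k+1}mw^{k}mw]\ee-\mu^{2k+2}$, and these do not follow from the identities of Theorem \ref{Thm2bridge}: they require proving, by induction on $i$, the relation $\mu^{-i}[w^{k+i}mw]+\mu^{i}[w^{k-i}mw]\ee 0$, which in turn rests on the auxiliary identity $[w^{j}mw]\ee\mu^{2j+1}[w^{-j-1}mw]$, proved by diagonalizing $W$ and analyzing its eigenvalues (Lemma \ref{AuxLemWpower}), together with the bookkeeping identities of Lemma \ref{LemComphelp} (e.g.\ $E_0W=F_0^{-1}$, $[gE^imw]\ee\mu^{-i}[gmw]$). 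Your proposal gestures at the right starting point (write $L$ in terms of $m,w,E$ and work with $\epsilon$-values), but it omits the inductive mechanism that actually carries the $k$-dependence, so as written it is a plan with the essential argument missing.
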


	From Lemma \ref{LemLongitude} we conclude that $(1-\lambda\mu^{2k+6})$ divides $A_K^3(\lambda,\mu)$. Were this factor to exist in $A^2_K(\lambda,\mu)$ there would be a factor of $(1-\lambda\mu^{4k+12})$ in the $A$-polynomial of the $(-2,3,2k+1)$-pretzel knot. Results in \cite[Theorem 1.6]{Mat02} (upon translating to when $n=2k+1$) show that the variety has one, or two, components, depending on whether 3 does not, or does, divide $k-1$. The geometric component appearing in both cases does not agree with $(1-\lambda\mu^{4k+12})$ and when $3\vert(k-1)$ the non-geometric factor in the $A$-polynomial is $(1-\lambda\mu^{4k+8})$. 

	By Lemma \ref{KCHIrreps}, $\rho$ cannot be reducible. The proof of Lemma \ref{LemLongitude} is presented after the proof of Lemma \ref{LemPretzelsRis0}.

\end{proof}

\begin{proof}[Proof of Lemma \ref{LemPretzelsRis0}]
	We begin by noting some identities between the entries of $W^k$. We sometimes use $y$ to denote $1+\mu+(\mu^{-1}-1)x$.
	
	Since $W^{k+1}=W^kW$ we see that $(W^{k+1})_{11} = \frac1{1-\mu}(W^k)_{11}+(W^k)_{12}$. Thus by our assumption, 
		\begin{equation}
		(W^k)_{12}=\frac{-\mu}{1-\mu}(W^k)_{11}.
		\label{eqn1}
		\end{equation}
		
	Now, upon noting that $E_0=MWM^{-1}W^{-1}M^{-1}$ and $F_0=M^{-1}W^{-1}MWMW^{-1}$, one may compute that
	\begin{equation}
		E_0 = \begin{pmatrix}\frac{-\mu}{1-\mu}&1+\frac{\mu}{(1-\mu)^2}&\frac{\mu}{1-\mu}x\\ \frac{-1}{\mu}&\frac 1{\mu(1-\mu)}&x\\ 0&0&1\end{pmatrix} \qquad F_0= \begin{pmatrix}1&0&0\\ 0&\frac{y}{\mu}&xy\frac{1-\mu}{\mu^2}+1\\ 0&\frac{-1}{\mu}&-x\frac{1-\mu}{\mu^2}\end{pmatrix}
	\label{EandF}
	\end{equation}

	Now write out the first row of $R$ in terms of the entries of $W^k$. In each case, an application of (\ref{eqn1}) shows that $R_{1j}=0$ for $j=1,2,3$.

	Using $W^{k+1}=WW^k$, and the supposition that $(W^{k+1})_{11}=(W^k)_{11}$, we also see that
		\begin{equation}
		\frac{\mu}{1-\mu}(W^k)_{11}+x(W^k)_{21}+\left(\mu^2+\frac{\mu^3}{(1-\mu)^2}+xy\right)(W^k)_{31}=0
		\label{eqn2}
		\end{equation}

	We also have the following two identities:
		\begin{equation}
		\frac{1}{1-\mu}(W^k)_{21}+(W^k)_{22}=(W^kW)_{21}=(WW^k)_{21}=(W^k)_{11}+\frac{\mu^3}{1-\mu}(W^k)_{31};
		\label{21}
		\end{equation}
		\begin{equation}
		\frac1{1-\mu}(W^k)_{31}+(W^k)_{32} = (W^kW)_{31}=(WW^k)_{31}=(W^k)_{21}+y(W^k)_{31}.
		\label{31}
		\end{equation}

	Additionally, note that $y=1+\mu+(\mu^{-1}-1)x$ implies that $\frac{1-y(1-\mu)-\mu^2}{1-\mu}+\frac{1-\mu}{\mu}x=0$. Now we may use (\ref{EandF}), (\ref{eqn2}), and (\ref{21}) to calculate that 
	\al{
		R_{21} 
		&=\frac{-\mu}{1-\mu}(W^k)_{21}+\frac{-1}{\mu}(W^k)_{22}-y\left(\frac 1{\mu}(W^k)_{21}+x\frac{1-\mu}{\mu^2}(W^k)_{31}\right)-(W^k)_{31}\\
		&= \frac{-\mu}{1-\mu}(W^k)_{21}+\frac{1}{\mu}\left((W^k)_{11}-(W^k)_{22}\right)-y\left(\frac 1{\mu}(W^k)_{21}+x\frac{1-\mu}{\mu^2}(W^k)_{31}\right)-(W^k)_{31}-\frac{1}{\mu}(W^k)_{11}\\
		&= \frac{1}{\mu}\left(\frac{1-y(1-\mu)-\mu^2}{1-\mu}\right)(W^k)_{21}-\frac{\mu^2}{1-\mu}(W^k)_{31}-(W^k)_{31}-\frac{1-\mu}{\mu^2}\left(\frac{\mu}{1-\mu}(W^k)_{11}+yx(W^k)_{31}\right)\\
		&= \frac{1}{\mu}\left(\frac{1-y(1-\mu)-\mu^2}{1-\mu}+\frac{1-\mu}{\mu}x\right)(W^k)_{21}+\left(1-\mu+\frac{\mu}{1-\mu}-\frac{\mu^2}{1-\mu}-1\right)(W^k)_{31}=0.
	}

	By a similar calculation, that uses (\ref{31}), we also see that $R_{31}=0$: 
	\al{
		R_{31} 
		&= \frac{-\mu}{1-\mu}(W^k)_{31}+\frac{-1}{\mu}(W^k)_{32}+\left(\frac{(W^k)_{21}}{\mu}+x\frac{1-\mu}{\mu^2}(W^k)_{31}\right)\\
		&= \mu^{-1}\left((W^k)_{21}-(W^k)_{32}\right)+\left(x\frac{1-\mu}{\mu^2}-\frac{\mu}{1-\mu}\right)(W^k)_{31}\\
		&= \mu^{-1}\left(\frac{1-y(1-\mu)}{1-\mu}(W^k)_{31}\right)+\left(x\frac{1-\mu}{\mu^2}-\frac{\mu}{1-\mu}\right)(W^k)_{31}\\
		&= \mu^{-1}\left(\frac{1-y(1-\mu)-\mu^2}{1-\mu}+\frac{1-\mu}{\mu}x\right)(W^k)_{31}=0.
	}

	Similar, though perhaps more extensive, calculations may be carried out for the remaining four entries of $R$, showing that each one vanishes. For these calculations, the assiduous reader will want to use identities
	\begin{equation}
	x(W^k)_{31}+(W^k)_{33}=(W^{k+1})_{32}=(W^k)_{22}+y(W^k)_{32}\qquad\text{and}
	\label{32}
	\end{equation}
	{\small
	\begin{equation}
	\left(\mu^2+\frac{\mu^3}{(1-\mu)^2}+xy\right)(W^k)_{31}+\frac{\mu^3}{1-\mu}(W^k)_{32}+y(W^k)_{33}=(W^{k+1})_{33}=(W^k)_{23}+y(W^k)_{33}
	\label{33}
	\end{equation}
	}

	in conjunction with (\ref{21}) and (\ref{31}) to reduce $(W^k)_{23}$, $(W^k)_{33}$, $(W^k)_{22}$, and $(W^k)_{32}$ to expressions in $(W^k)_{11}, (W^k)_{21}$, and $(W^k)_{31}$. This, combined with (\ref{eqn2}) yields the result.
\end{proof}

Having proved Lemma \ref{LemPretzelsRis0} we obtain a well-defined augmentation $\epsilon:HC_0(K)\to\bb C$ satisfying $\epsilon([w])=1$, $\epsilon([w^{-1}])=-\mu$, and $\epsilon([w^k])=\epsilon([w^{k+1}])$. We will use these properties to prove Lemma \ref{LemLongitude}.

We will need to use an auxiliary result near the end of the proof of Lemma \ref{LemLongitude}.

\begin{lem}
Let $j\ge0$. Then $[w^jmw]\ee\mu^{2j+1}[w^{-j-1}mw]$.
\label{AuxLemWpower}
\end{lem}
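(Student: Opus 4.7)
The plan is to use relation~(3) of Theorem~\ref{ThmPi1Cords} to reduce the claim to a statement about two sequences satisfying linked linear recurrences, then induct on $j$. First I would apply relation~(3) with $\gamma_1=w^{\pm(j+1)}$ and $\gamma_2=w$; together with $\epsilon([w])=1$ this gives
\[[w^jmw]\ee[w^{j+1}]-[w^j]\qquad\text{and}\qquad [w^{-j-1}mw]\ee[w^{-j}]-[w^{-j-1}].\]
Writing $a_j=[w^{j+1}]-[w^j]$ and $b_j=[w^{-j}]-[w^{-j-1}]$, the lemma reduces to the identity $a_j\ee\mu^{2j+1}b_j$.

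The key structural input is that $w$ is the product of two meridians, so $W=\rho(w)=N_1N_2$ with each $N_i=I+(\mu-1)P_i$ a rank-one modification of the identity. Thus $W-I$ has rank at most $2$, forcing $1$ to be an eigenvalue of $W$. Combined with $\det W=\mu^2$, the characteristic polynomial of $W$ factors as $(t-1)(t^2-T_Wt+\mu^2)$ for some scalar $T_W$, and Cayley--Hamilton gives $(W-I)(W^2-T_WW+\mu^2I)=0$. Setting $v=(W-I)e_1\in\text{Im}(W-I)$, one deduces $(W^2-T_WW+\mu^2I)v=0$ since $\text{Im}(W-I)\cap\ker(W-I)=0$ generically. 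Applying this identity to $W^jv$ and $W^{-j-3}v$, reading off the $(1,1)$-entries, and rescaling by $1-\mu$ yields the two recurrences
\[a_{j+2}\ee T_Wa_{j+1}-\mu^2a_j\qquad\text{and}\qquad \mu^2b_{j+2}\ee T_Wb_{j+1}-b_j,\]
and a short substitution confirms that $\{\mu^{2j+1}b_j\}$ satisfies the same recurrence as $\{a_j\}$.

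It therefore suffices to verify two base cases. For $j=0$ one has $a_0=[w]-[e]=\mu$ and $b_0=[e]-[w^{-1}]=1$, so $a_0=\mu b_0$ is immediate. For $j=1$ the identity $a_1\ee\mu^3b_1$ amounts to $[w^2]+\mu^3[w^{-2}]\ee1-\mu^4$, which I would verify by direct computation of $(W^2)_{11}$ and $(W^{-2})_{11}$ from the explicit forms of $W$ and $W^{-1}$ used in the proof of Theorem~\ref{ThmPretzels}. Induction on $j$ then concludes.

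The main obstacle is the $j=1$ base case: unlike the $j=0$ case, it does not follow from the cord relations alone and requires either the matrix computation above, or an additional cord-algebra argument that makes essential use of the augmentation values $\epsilon([w])=1$ and $\epsilon([w^{-1}])=-\mu$ together with identities implicit in the proof of Lemma~\ref{LemPretzelsRis0}.
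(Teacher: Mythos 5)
Your argument is correct, and it takes a genuinely different route from the paper's. The paper verifies the case $j=0$ directly, then diagonalizes $W$ (for all but finitely many $\mu$) with eigenvalues $1,a\pm\sqrt b$ satisfying $(a+\sqrt b)(a-\sqrt b)=\mu^2$, writes $A_j=W_{11}^{j+1}-W_{11}^{j}-\mu^{2j+1}(W_{11}^{-j}-W_{11}^{-j-1})$ in terms of two explicit constants $A,B$, and runs a first-order induction $A_j=(a-\sqrt b)A_{j-1}$ whose correction term is killed by the identity $\mu A+(a-\sqrt b)B=0$, checked from closed formulas. You instead use Cayley--Hamilton: since $w$ is a product of two meridians, $W-I$ has rank at most $2$ (so $1$ is an eigenvalue of the $3\times3$ matrix $W$) and $\det W=\mu^2$, giving $(W-I)(W^2-T_WW+\mu^2I)=0$; hence $a_j$ and $\mu^{2j+1}b_j$ satisfy the same second-order recurrence and it suffices to check $j=0,1$. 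This trades the paper's footnote-level identity on $A,B$ for a second base case, and it avoids both the diagonalization and the ``except for finitely many $\mu$'' caveat, which is a genuine simplification. Two remarks. First, your justification that $(W^2-T_WW+\mu^2I)v=0$ for $v=(W-I)e_1$ via ``$\mathrm{Im}(W-I)\cap\ker(W-I)=0$ generically'' is unnecessary and reintroduces a genericity hypothesis you don't need: the two factors are polynomials in $W$, so they commute and $(W^2-T_WW+\mu^2I)(W-I)=0$ holds identically. Second, the $j=1$ base case you leave as ``would verify'' does check out, but only because $y=1+\mu+(\mu^{-1}-1)x$: with $(W^2)_{11}=\tfrac{1}{(1-\mu)^2}+x$ and $(W^{-2})_{11}$ computed from $W^{-1}=\mu^{-2}\,\mathrm{adj}(W)$, the identity $[w^2]+\mu^3[w^{-2}]\ee 1-\mu^4$ holds identically in $x$ after that substitution (and fails for generic $y$), so your proof is complete once that short computation is written out.
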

\begin{proof}
	One can readily use the relations on the cord algebra and the facts $[w]\ee1$ and $[w^{-1}]\ee -\mu$ to verify the lemma when $j=0$. The claim of the lemma concerns entries of a linear combination of powers of $W$. Indeed, $[w^jmw]=[w^{j+1}]-[w^j]$ and $[w^{-j-1}mw]=[w^{-j}]-[w^{-j-1}]$. Moreover, by the definition of $\epsilon$ we know \begin{equation}[w^{j+1}]-[w^j]\ee(1-\mu)(W^{j+1}_{11}-W^j_{11})\quad\text{and}\quad [w^{-j}]-[w^{-j-1}]\ee(1-\mu)(W^{-j}_{11}-W^{-j-1}_{11}).\nonumber\end{equation}
	As $\mu\ne1$, the statement to be proved is
		\begin{equation}
		A_j:=W_{11}^{j+1}-W_{11}^{j}-\mu^{2j+1}(W_{11}^{-j}-W_{11}^{-j-1})=0
		\label{LemWpowRelation}
		\end{equation}

	We know that $A_0=0$. Except for finitely many values of $\mu$, one checks that $W$ is diagonalizable with eigenvalues $1, a-\sqrt b, a+\sqrt b$ (for some square root of $b$). Here $a,b$ are elements\footnote{$a=\frac{x(1-\mu)^2+\mu(2-\mu^2)}{2\mu(1-\mu)}-\frac12$} of $\rat(\mu,x)$ such that $a^2-b=\mu^2$. Let $S$ be the matrix of eigenvectors so that $S^{-1}WS=\text{diag}[1,a-\sqrt b,a+\sqrt b]$.

	Using the diagonalization, we compute that there are numbers\footnote{$A=\frac{(a-1-\sqrt{b})(a-\mu-\sqrt{b})(x(1-\mu)^2-\mu(a-a\mu+\mu^2-\sqrt{b}-\mu(2-\sqrt{b})))}{2\sqrt{b}(x(1-\mu)^2(2\mu-1)+\mu^2(\mu^3-4\mu^2+(2a+5)\mu-(2a+1))},\quad B=\frac{(a-1+\sqrt{b})(a-\mu+\sqrt{b})(x(1-\mu)^2-\mu(a-a\mu+\mu^2+\sqrt{b}-\mu(2+\sqrt{b})))}{2\sqrt{b}(x(1-\mu)^2(2\mu-1)+\mu^2(\mu^3-4\mu^2+(2a+5)\mu-(2a+1))}$} $A,B\in\rat(\mu,x,\sqrt b)$, that are independent of $j$, such that
		\[A_j=\left(a-\sqrt b\right)^{-j-1}\left(\mu^{2j+1}-(a-\sqrt b)^{2j+1}\right)A+\left(a+\sqrt b\right)^{-j-1}\left(\mu^{2j+1}-(a+\sqrt b)^{2j+1}\right)B.\]
	As $\mu^2=(a+\sqrt b)(a-\sqrt b)$ we have that
		{\small
		\al{
			A_j	&=\left(a-\sqrt b\right)^{-j}\left((a+\sqrt b)\mu^{2j-1}-(a-\sqrt b)^{2j}\right)A+\left(a+\sqrt b\right)^{-j}\left((a-\sqrt b)\mu^{2j-1}-(a+\sqrt b)^{2j}\right)B\\
			&=\left(a-\sqrt b\right)\left(\left(a-\sqrt b\right)^{-j}\left(\mu^{2j-1}-(a-\sqrt b)^{2j-1}\right)A+\left(a+\sqrt b\right)^{-j}\left(\mu^{2j-1}-(a+\sqrt b)^{2j-1}\right)B\right)\\
			&\qquad+\left(a-\sqrt b\right)^{-j}\mu^{2j-1}(2\sqrt b)A+\left(a+\sqrt b\right)^{-j}(2\sqrt b)(a+\sqrt b)^{2j-1}B\\
			&=(a-\sqrt{b})A_{j-1}+\left(a+\sqrt b\right)^{j}\left(\mu^{-1}(2\sqrt b)A+(2\sqrt b)(a+\sqrt b)^{-1}B\right).
		}
		}
	The numbers $A,B,a,b$ are such that 
	$\mu A+(a-\sqrt b)B=0,$ and this implies that $\mu^{-1}(2\sqrt b)A+(2\sqrt b)(a+\sqrt b)^{-1}B=0$. By induction $A_{j-1}=0$ and so $A_j=0$, concluding the proof.
\end{proof}

We collect some additional observations before proving Lemma \ref{LemLongitude}.

\begin{lem} The matrix $F_0$ commutes with $M$ and $E_0W=F_0^{-1}$. We also have, for any $g,h\in\pi_K$, the following identities on values of $\epsilon$:
	\begin{align}
	[gEwh]	&\ee[gF^{-1}h];\label{Id1}\\
	[F^{\pm1}g]&\ee[g]\ee[gF^{\pm1}];\label{Id2}\\
	[gE^{-1}]&\ee[gw].\label{Id3}
	\end{align}
Moreover, $[gE^imw]\ee\mu^{-i}[gmw]$ and $[E^{-1}mE^ig]\ee\mu^{-i+1}([g]-[w^{-1}g])$ for any $g\in\pi_K$ and any integer $i$.
\label{LemComphelp}
\end{lem}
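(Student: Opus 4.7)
The plan is to verify the two matrix identities directly, then obtain the cord-algebra identities as immediate consequences of the formula $\epsilon([u])=(1-\mu)\langle\rho(u)e_1,e_1\rangle$, and finally dispose of the two shifted-index identities via a brief analysis of the eigenstructure of $E_0$.

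First, $F_0M=MF_0$ is immediate from the explicit form of $F_0$ in (\ref{EandF}), whose first row and first column are $(1,0,0)$ and $(1,0,0)^T$: these mean $F_0$ preserves the eigenspace decomposition of $M=\mathrm{diag}[\mu,1,1]$. The identity $E_0W=F_0^{-1}$ is a direct $3\times 3$ matrix computation, crucially using the substitution $y=1+\mu+(\mu^{-1}-1)x$ that was built into $W$ in Lemma \ref{LemPretzelsRis0}. From these two matrix identities, (\ref{Id1})--(\ref{Id3}) follow by sandwiching with $e_1^T\rho(g)\,\cdot\,\rho(h)e_1$: (\ref{Id1}) is $E_0W=F_0^{-1}$ itself; (\ref{Id2}) uses $F_0e_1=e_1$ and $e_1^TF_0=e_1^T$ from the form of $F_0$; and (\ref{Id3}) uses $E_0^{-1}=WF_0$ combined with $F_0e_1=e_1$. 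An entirely analogous argument using $E_0=F_0^{-1}W^{-1}$ and $e_1^TF_0^{-1}=e_1^T$ yields the auxiliary identity $[Eg]\ee[w^{-1}g]$, which I will need at the end.

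For $[gE^imw]\ee\mu^{-i}[gmw]$, I would compute directly that $MWe_1=(\mu/(1-\mu),1,0)^T$ is a $\mu^{-1}$-eigenvector of $E_0$ (a single three-line calculation). Iteration gives $E_0^iMWe_1=\mu^{-i}MWe_1$ for every $i\in\ints$, and sandwiching with $e_1^T\rho(g)$ on the left produces the claim.

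The final identity $[E^{-1}mE^ig]\ee\mu^{-i+1}([g]-[w^{-1}g])$ is where the main work lies. My plan is to expand $M=I+(\mu-1)e_1e_1^T$, giving
\[
e_1^TE_0^{-1}ME_0^i = e_1^TE_0^{i-1}+(\mu-1)(e_1^TE_0^{-1}e_1)(e_1^TE_0^i).
\]
A one-entry calculation of $e_1^TE_0^{-1}=e_1^TWF_0$ shows $e_1^TE_0^{-1}e_1=1/(1-\mu)$, so the right-hand side collapses to $e_1^TE_0^{i-1}(I-E_0)$. Next I would argue that $E_0$ has minimal polynomial $(t-1)(t-\mu^{-1})$: the eigenvalue $1$ is visible from the third row $(0,0,1)$, the eigenvalue $\mu^{-1}$ from the previous paragraph, and a trace/determinant check (using $\det E_0=1/\mu$) shows these exhaust the spectrum (with $1$ of algebraic multiplicity two); inspecting $E_0-I$ one sees rows 1 and 2 are proportional, so $\mathrm{rank}(E_0-I)=1$ and $E_0$ is diagonalizable. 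Therefore $(E_0-I)(E_0-\mu^{-1}I)=0$, which rearranges to $E_0(I-E_0)=\mu^{-1}(I-E_0)$ and, by induction, $E_0^k(I-E_0)=\mu^{-k}(I-E_0)$ for all $k\in\ints$. Substituting yields
\[
e_1^TE_0^{i-1}(I-E_0)=\mu^{1-i}e_1^T(I-E_0),
\]
whence $[E^{-1}mE^ig]\ee\mu^{-i+1}([g]-[Eg])$. Applying the auxiliary identity $[Eg]\ee[w^{-1}g]$ derived above finishes the proof. The chief obstacle is the eigenstructure step; everything else is routine verification.
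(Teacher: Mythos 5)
Your proof is correct, and for the first half it essentially coincides with the paper's: the two matrix identities are read off from (\ref{EandF}), and (\ref{Id1})--(\ref{Id3}) follow from $E_0W=F_0^{-1}$, the block form of $F_0$, and the fact that $W$ and $E_0^{-1}$ share their first column, which is exactly the paper's argument written out via $\epsilon([u])=(1-\mu)\langle\rho(u)e_1,e_1\rangle$. Where you genuinely diverge is in the two ``moreover'' identities. The paper stays in the cord algebra: since $E^i=mwm^{-i}w^{-1}m^{-1}$, one has $[gE^imw]=[gmwm^{-i}]\ee\mu^{-i}[gmw]$, and $[E^{-1}mE^ig]$ is reduced using the relations (\ref{epsRel}) together with $[E^{-1}]\ee[w]\ee1$ and $[w^{-1}]\ee-\mu$. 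You instead work at the matrix level: $MWe_1$ is a $\mu^{-1}$-eigenvector of $E_0$, and $E_0$ is diagonalizable with minimal polynomial $(t-1)(t-\mu^{-1})$, so $E_0^k(I-E_0)=\mu^{-k}(I-E_0)$ for all $k$; combined with $M=I+(\mu-1)e_1e_1^T$, $(E_0^{-1})_{11}=\tfrac1{1-\mu}$, and your auxiliary identity $[Eg]\ee[w^{-1}g]$ (valid, since $e_1^TE_0=e_1^TF_0^{-1}W^{-1}=e_1^TW^{-1}$), this yields both claims for every integer $i$. Both routes are sound; the paper's is shorter and avoids any eigenstructure analysis, while yours can be streamlined by noting $E=(mw)m^{-1}(mw)^{-1}$, so $E_0=(MW)M^{-1}(MW)^{-1}$ is conjugate to $M^{-1}$ --- this gives the $\mu^{-1}$-eigenvector $MWe_1$ and the minimal polynomial at once, with no trace, determinant, or rank computation, and it is the same conjugation the paper exploits at the group level. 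One small inaccuracy: $E_0W=F_0^{-1}$ does not really hinge on the substitution $y=1+\mu+(\mu^{-1}-1)x$, since $Ew=mF^{-1}m^{-1}$ in $\pi_K$ makes it a consequence of $F_0M=MF_0$ alone; this does not affect the validity of your argument.
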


\begin{proof}
	We readily check $F_0$ commutes with $M$ and $E_0W=F_0^{-1}$ from (\ref{EandF}). Identity (\ref{Id1}) is a consequence of $E_0W=F_0^{-1}$. That (\ref{Id2}) holds is inferred from the block form of $F_0$, and (\ref{Id3}) from the observation that $W$ and $E_0^{-1}$ have the same first column.
	
	To see that $[gE^imw]\ee\mu^{-i}[gmw]$ for any $g\in\pi_K$, note that $E^i=mwm^{-i}w^{-1}m^{-1}$ and use that $[gmwm^{-i}]\ee\mu^{-i}[gmw]$. Finally,  since $[E^{-1}]\ee[w]\ee1$, we know that for $g\in\pi_K$
	\al{
		[E^{-1}mE^ig] &\ee[E^{i-1}g]-[E^ig]\\
			  &\ee-[mw][m^{-i}w^{-1}m^{-1}g]\\
			  &\ee-\mu[w](\mu^{-i}[w^{-1}m^{-1}g])\\
			  &\ee-\mu^{-i+1}([w^{-1}m^{-1}][g]+[w^{-1}g])\\
			  &\ee\mu^{-i+1}([g]-[w^{-1}g]).
	}
\end{proof}

\begin{proof}[Proof of Lemma \ref{LemLongitude}]

	In Figure \ref{FigPretzKnot} we depict a projection of $K$ and a longitude of $K$ with ``blackboard framing.'' Denote the longitude there depicted by $L$; we have $\ell=m^{-(2k+6)}L$ in $\pi_K$. We will show that $[L]\ee 1-\mu$. As a result $\lambda(1-\mu)\ee [\ell]=[m^{-(2k+6)}L]\ee \mu^{-(2k+6)}(1-\mu)$, and so $\lambda = \mu^{-(2k+6)}$.

	A computation from the projection in Figure \ref{FigPretzKnot} allows us to write $L$ in terms of $m,w,$ and $E$:
	{\small	
		\[L = (Ew)(E^{-1}mE)E^{k}w^{k}((Ew)^{-1}mEw)(E^{-1}mE)(Ew)^{-k}w^{k}((Ew)^{-1}mEw)E^{-1}.\]
	}

	Using (\ref{Id1}) and that $F_0$ and $M$ commute from Lemma \ref{LemComphelp}, we may replace $(Ew)^{-1}m(Ew)$ with $m$ in the preceding expression without changing the value under the map $\epsilon$. From this observation and (\ref{Id2}) we obtain
		\[[L]\ee [E^{-1}mE^{k+1}w^{k}(mE^{-1}mE)F^kw^{k}mE^{-1}].\]

	Now use the relation $w^kE=Fw^k$ in $\pi_K$ and (\ref{Id3}) to obtain
		\[[L]\ee [E^{-1}mE^{k+1}w^{k}(mE^{-1}mE)w^{k}E^kmw].\]

	The facts $[gE^imw]\ee\mu^{-i}[gmw]$ and $[E^{-1}mE^ig]\ee\mu^{-i+1}([g]-[w^{-1}g])$ for any $g\in\pi_K$ inform us that
		\[[L]\ee\mu^{-2k}([w^{k}(mE^{-1}mE)w^{k}mw]-[w^{k-1}(mE^{-1}mE)w^{k}mw]).\]
	By applying relations in (\ref{epsRel}), Lemma \ref{LemComphelp} and using that $[w^kmw]=[w^{k+1}]-[w^k]\ee 0$,
	\al{
	[L]	&\ee \mu^{-2k}([w^{k}(mE^{-1}mE)w^{k}mw]-[w^{k-1}(mE^{-1}mE)w^{k}mw])\\
	 	&\ee \mu^{-2k}([w^{k}mw^{k}mw]-[w^kmE^{-1}][Ew^kmw]-[w^{k-1}mw^{k}mw]+[w^{k-1}mE^{-1}][Ew^kmw])\\
	 	&\ee \mu^{-2k}([w^{k}mw^{k}mw]-[w^{k-1}mw^{k}mw]+[w^{k-1}mw][w^{k-1}mw])\\
	 	&\ee \mu^{-2k-1}([w^{k}mw^{k}E^{-1}mw]-[w^{k-1}mw^{k}E^{-1}mw]+[w^{k-1}mw][w^{k-1}E^{-1}mw])\\
	 	&\ee \mu^{-2k-1}([w^{k}mF^{-1}w^{k}mw]-[w^{k-1}mF^{-1}w^{k}mw]+[w^{k-1}mw][w^{-1}F^{-1}w^{k}mw])\\
	 	%&\ee \mu^{-2k-1}([w^{k}mwm^{-1}w^{-1}m^{-1}wmw^{k}mw]-[w^{k-1}mwm^{-1}w^{-1}m^{-1}wmw^{k}mw]+[w^{k-1}mw][m^{-1}w^{-1}m^{-1}wmw^{k}mw])\\
	 	&\ee \mu^{-2k-1}([w^{k+1}mw^{k}mw]-[w^{k}mw^kmw]).
	}

	We will calculate that \begin{equation}[w^{k+1}mw^kmw]\ee -\mu^{2k+2}\quad\text{and}\quad [w^kmw^kmw]\ee -\mu^{2k+1}.
	\label{EpsCalc4L}
	\end{equation} 
	Our calculation on $\epsilon([L])$ then implies that $[L]\ee1-\mu$. We compute (\ref{EpsCalc4L}) by showing that for any $i\ge0$, 
	\begin{equation}
	\mu^{-i}[w^{k+i}mw]+\mu^i[w^{k-i}mw]\ee 0,
	\label{LemWpower2}
	\end{equation}

	Taking $i=k$ we see that $0\ee\mu^{-k}[w^{2k}mw]+\mu^k[mw]$ which implies $[w^kmw^kmw]\ee[w^{2k}mw]\ee -\mu^{2k+1}$, the first equality here because $[w^kmw]\ee0$. By taking $i=k+1$ we get that $[w^{2k+1}mw]=-\mu^{2k+2}[w^{-1}mw]=-\mu^{2k+2}$.

	Turning to the proof of (\ref{LemWpower2}), note that when $i=0$, both sides are zero. For the case $i=1$ we have
	\al{
	\mu^{-1}[w^{k+1}mw]
	&+\mu[w^{k-1}mw]\\
	&\ee [w^{k-1}E^{-1}mw]+\mu^{-1}[wmw^{k+1}]\\
	&\ee [w^{k-1}E^{-1}w]-[w^k]-\mu^{-1}([m^{-1}w^{-1}][wmw^{k+1}]+[mw^{k+1}])+[w^{k+1}]\\
	&\ee [w^{k+1}]-[w^k]+[w^{k-1}E^{-1}w]-[w^{-1}F^{-1}w^{k+1}]\\
	&\ee[w^{k+1}]-[w^k]\ee 0,
	}
	the second to last equality from $w^kE^{-1}=F^{-1}w^k$. We see that (\ref{LemWpower2}) holds for $i=1$.

	Given some $i>1$, if (\ref{LemWpower2}) holds for $j<i$ then
	\al{
	\mu^{-i}[w^{k+i}mw]+\mu^i[w^{k-i}mw]
	&\ee \mu^{i-1}[w^{k-i}E^{-1}mw]+\mu^{-i}[wmw^{k+i}]\\
	&\ee\mu^{i-1}[w^{k-i}E^{-1}mw]-\mu^{-i}\left([w^{-1}m^{-1}][wmw^{k+i}]+[mw^{k+i}]\right)+\mu^{-i+1}[w^{k+i}]\\
	&\ee\mu^{i-1}[w^{k-i}E^{-1}w]-\mu^{i-1}[w^{k-i+1}]+\mu^{-i+1}[w^{k+i}]-\mu^{-i+1}[w^{-1}F^{-1}w^{k+i}]\\
	&=\mu^{i-1}[w^{-i}F^{-1}w^{k+1}]-\mu^{i-1}[w^{k-i+1}]+\mu^{-i+1}[w^{k+i}]-\mu^{-i+1}[w^{k-1}E^{-1}w^i]\\
	&\ee \mu^{i-1}[w^{k-i+1}mw]+\mu^{-i+1}[w^{k+i-1}mw]\\
	&\qquad+\mu^{i-1}[w^{-i+1}m^{-1}w^{-1}m^{-1}][wmw^{k+1}]+\mu^{-i+1}[w^{k-1}mw][w^{-1}m^{-1}w^i]\\
	&\ee\mu^{i-1}[w^{-i+1}m^{-1}w^{-1}m^{-1}][wmw^{k+1}]+\mu^{-i+1}[w^{k-1}mw][w^{-1}m^{-1}w^i],
	}
	the last equality from the induction hypothesis. Now $[w^{-i+1}m^{-1}w^{-1}m^{-1}]\ee-\mu^{-1}[w^{-i}mw]$ and also $[w^{-1}m^{-1}w^i]\ee-[w^{i-1}mw]$. But as was shown in Lemma \ref{AuxLemWpower}, $[w^{i-1}mw]\ee\mu^{2i-1}[w^{-i}mw]$ for all $i$. And so  
	\al{
	\mu^{-i}[w^{k+i}mw]+\mu^i[w^{k-i}mw] 
	&\ee-\mu^{i-1}[w^{-i}mw](\mu^{-1}[w^{k+1}mw])-\mu^{-i+1}[w^{k-1}mw](\mu^{2i-1}[w^{-i}mw])\\
	&\ee-\mu^{i-1}[w^{-i}mw]\left(\mu^{-1}[w^{k+1}mw]+\mu[w^{k-1}mw]\right)\ee0,
	}
	since (\ref{LemWpower2}) holds when $i=1$.
\end{proof}
 
\bibliography{KCHReps_refs}
\bibliographystyle{alpha}

\end{document}